\def\th@plain{%
	\upshape 
}
\renewenvironment{proof}[1][\proofname]{\par
	\pushQED{\qed}%
	\normalfont \topsep6\p@\@plus6\p@\relax
	\trivlist
	\item[\hskip\labelsep
	\bfseries
	#1\@addpunct{.}]\ignorespaces
}{%
\popQED\endtrivlist\@endpefalse
}
\newtheorem{theorem}{Theorem}
\newtheorem{lemma}{Lemma}
\newtheorem{corollary}{Corollary}
\theoremstyle{definition}
\newtheorem{definition}{Definition}
\newtheorem{proposition}{Proposition}
\newtheorem{observation}{Observation}
\newcommand{\dd}{\textsf{Delete}}
\newcommand{\ds}{\textsf{DeleteSave}}
\newcommand{\del}{\textsf{Del}}
\def\int{\mathrm{int}}
\def\ext{\mathrm{ext}}
\def\fw{\mathrm{fw}}
\def\ew{\mathrm{ew}}
\def\w{\textsf{w}}
\begin{document}
	\title{Weak degeneracy of planar graphs and locally planar graphs}
	\author {Ming Han\footnote{College of Mathematics Science, Zhejiang Normal University, Jinhua, 321004, P. R. China}  {}\quad Tao Wang\footnote{Center for Applied Mathematics, Henan University, Kaifeng, 475004, P. R. China} {}\quad  Jianglin Wu\footnote{College of Mathematics Science, Zhejiang Normal University, Jinhua, 321004, P. R. China}  {}\quad Huan Zhou\footnote{College of Mathematics Science, Zhejiang Normal University, Jinhua, 321004, P. R. China} {}\quad Xuding Zhu\footnote{College of Mathematics Science, Zhejiang Normal University, Jinhua, 321004, P. R. China. This research is supported by Grants: NSFC 11971438, U20A2068, ZJNSF LD19A010001. }
	}
	
	\maketitle
	
	\begin{abstract}
		Weak degeneracy is a variation of degeneracy  which shares many nice properties of degeneracy. 
		In particular, if a graph $G$ is weakly $d$-degenerate, then for any $(d+1)$-list assignment $L$ of $G$, one can construct an $L$-coloring of $G$ by a modified greedy coloring algorithm. It is known that planar graphs of girth 5 are 3-choosable and locally planar graphs are $5$-choosable. This paper strengthens these results and proves that 
		planar graphs of girth 5 are weakly 2-degenerate and locally planar graphs are weakly 4-degenerate.

		Keywords: Weak degeneracy; Planar graph; Girth; Locally planar graph; Edge-width.
		
		MSC2020: 05C15
	\end{abstract}
	
	\section{Introduction}
	For a graph $G$, the greedy coloring algorithm colors vertices one by one in order $v_1, v_2, \dots, v_n$, assigning $v_i$ the least-indexed color not used on its colored neighbors. An upper bound for the number of colors used in such a coloring is captured in the notion of graph degeneracy. Let $\mathbb{Z}$ be the set of integers, and $\mathbb{Z}^G$ be the set of mappings $f: V(G) \to \mathbb{Z}$. For $f \in \mathbb{Z}^G$ and a subset $U$ of $V(G)$, let $f|_U$ be the restriction of $f$ to $U$, and let $f_{-U}: V(G)-U \to \mathbb{Z}$ be defined as $f_{-U}(x) = f(x) - |N_G(x) \cap U|$ for $x \in V(G)-U$. For convenience, we may use $f$ for $f|_U$, and  write $f_{-v}$ for $f_{-\{v\}}$. We denote by $E[U]$ the set of edges in $G$ with both end vertices in $U$. 
		
	Let $\mathcal{L}$ be the set of pairs $(G,f)$, where $G$ is a graph and $f \in \mathbb{Z}^G$. 
	\begin{definition}
		The \emph{deletion operation} $\dd(u): \mathcal{L} \to \mathcal{L}$ is defined as 
		$$\dd(u)(G,f)=(G-u,f_{-u}).$$
		We say $\textsf{Delete}(u)$ is \emph{legal} for $(G,f)$ if both $f$ and $f_{-u}$ are non-negative.  
		A graph $G$ is \emph{$f$-degenerate} if, starting with $(G,f)$, it is possible to remove all vertices from $G$ by a sequence of  legal deletion operations. For a positive integer $d$, we say that $G$ is \emph{$d$-degenerate} if it is degenerate with respect to the constant $d$ function. The \emph{degeneracy} of $G$, denoted by $\textsf{d}(G)$, is the minimum $d$ such that $G$ is $d$-degenerate.
	\end{definition}

	The quantity $\textsf{d}(G)+1$ is called the \emph{coloring number} of $G$, and is an upper bound for many graph coloring parameters: the chromatic number $\chi(G)$, the choice number $\chi_{\ell}(G)$, the paint number $\chi_\textsf{P}(G)$, the DP-chromatic number $\chi_\textsf{DP}(G)$ and the DP-paint number $\chi_\textsf{DPP}(G)$. The definitions of some of these parameters are complicated. As we  shall not discuss these parameters, other than saying that they are bounded by the weak degeneracy defined below, we omit the definitions and refer the reader to \cite{MR4117373} for the definitions and discussion about these parameters.

	The coloring number $\textsf{d}(G)+1$ of $G$, as an upper bound for the above mentioned graph coloring parameters, is often not tight. 
	It is therefore interesting to see if we can modify the greedy coloring algorithm to save  some of the colors and get a better upper bound. Motivated by this, Bernshteyn and Lee \cite{Bernshteyn2021a} recently introduced the concept of weak degeneracy of a graph.
	
	Assume $L$ is a list assignment of $G$ and we try to construct an $L$-coloring of $G$.  Assume $uw$ is an edge of $G$. In the greedy coloring algorithm, if we assign a color to $u$, then it is counted that $L(w)$ loses one color. However, if $|L(u)|>|L(w)|$, then one can assign to $u$ a color from $L(u)-L(w)$, and hence $L(w)$ will not lose a color in this step. The concept of weak degeneracy deals with this situation.

	\begin{definition}
			The \emph{deletion-save operation} $\ds(u,w): \mathcal{L} \to \mathcal{L}$ is defined as 
			$$\ds(u,w)(G,f)=(G-u,f_{-u}+\delta_w),$$
			where $\delta_w(v) = 1$ if $v=w$ and $\delta_w(v) = 0$ otherwise. 
			We say $\ds(u,w)$ is \emph{legal} for $(G,f)$ if $uw$ is an edge of $G$, $f(u) > f(w)$ and both $f$ and $f_{-u}+\delta_w$ are  non-negative.
	\end{definition}

	\begin{definition}
		A \emph{removal scheme} $\Omega = \del( \theta_1,\theta_2, \dots, \theta_k): \mathcal{L} \to \mathcal{L}$, where for each $i$, either $\theta_i=\langle u_i \rangle$ representing the deletion operation $\dd(u_i)$, or $\theta_i=\langle u_i,w_i \rangle$ representing the deletion-save operation $\ds(u_i,w_i)$, is defined recursively as follows:
$$\del(\langle u \rangle)(G,f)=\dd(u)(G,f), \quad\del(\langle u,w \rangle)(G,f)=\ds(u,w)(G,f)$$ and for $k \ge 2$, $$ \del(\theta_1,\theta_2, \dots, \theta_k)(G,f)= \del(\theta_k)(\del (\theta_1,\theta_2, \dots, \theta_{k-1})(G,f)).$$
We say $\del( \theta_1,\theta_2, \dots, \theta_k)$ is legal for $(G,f)$ if $\del (\theta_1,\theta_2, \dots, \theta_{k-1})$ is legal for $(G,f)$ and $\del(\theta_k)$ is legal for $ \del (\theta_1,\theta_2, \dots, \theta_{k-1})(G,f)$. Each $\theta_i$  {in a removal scheme} is called a {\em move}. A move $\theta_i=\langle u \rangle$ or $\theta_i=\langle u, w \rangle$ \emph{removes} $u$ from $G$. A graph $G$ is \emph{weakly $f$-degenerate} if there is a removal scheme $\Omega = \del( \theta_1,\theta_2, \dots, \theta_n)$ which is legal for $(G, f)$ and removes all vertices of $G$. For a positive integer $d$, we say that $G$ is \emph{weakly $d$-degenerate} if it is weakly degenerate with respect to the constant $d$ function. The \emph{weak degeneracy} of $G$, denoted by $\textsf{wd}(G)$, is the minimum $d$ such that $G$ is weakly $d$-degenerate.
    \end{definition}


	The following proposition was proved in \cite{Bernshteyn2021a}.
	
	\begin{proposition}
		For every graph $G$,
		$$\chi(G)\leq \chi_{\ell}(G)\leq \chi_{\textsf{DP}}(G)\leq \chi_{\textsf{DPP}}(G)\leq \textsf{wd}(G)+1.$$
	\end{proposition}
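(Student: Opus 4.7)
The plan is to prove the chain $\chi(G) \le \chi_{\ell}(G) \le \chi_{\textsf{DP}}(G) \le \chi_{\textsf{DPP}}(G) \le \textsf{wd}(G)+1$. The first three inequalities are standard book-keeping: each parameter generalizes the previous one by weakening the structure the adversary must respect. A proper $k$-coloring is an $L$-coloring with constant lists $\{1,\dots,k\}$; a list assignment can be encoded as a DP-cover built from identity matchings between equal colors; and DP-coloring is the special case of DP-painting in which the adversary reveals the entire cover in a single round. I would dispose of these three inequalities in a short opening paragraph by unpacking the definitions collected in \cite{MR4117373}.

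The substantive bound is $\chi_{\textsf{DPP}}(G) \le \textsf{wd}(G)+1$. Write $d = \textsf{wd}(G)$, fix a legal removal scheme $\Omega = \del(\theta_1,\dots,\theta_n)$ for $(G,d)$, and denote by $u_i$ the vertex removed by $\theta_i$ and by $(G_i,f_i)$ the resulting state. My plan is to construct a painter strategy in the DP-paint game with a $(d+1)$-cover that treats vertices in the reverse order $u_n, u_{n-1}, \dots, u_1$, maintaining the invariant that immediately before the painter is ready to color $u_i$, every vertex $v \in V(G_{i-1})$ has at least $f_{i-1}(v)+1$ admissible colors in its currently revealed palette (i.e., colors not blocked by DP-matches to colored neighbors). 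Legality of $\theta_i$ forces $f_{i-1}(u_i)\ge 0$, so $u_i$ has at least one admissible color that the painter can commit to.

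The heart of the proof is checking that each move preserves the invariant. A pure deletion $\theta_i = \langle u_i\rangle$ is easy: painting $u_i$ eliminates at most one color from each neighbor's palette, exactly matching the decrement encoded in $f_{-u_i}$. The delicate case is a deletion-save $\theta_i = \langle u_i, w_i\rangle$ with $f_{i-1}(u_i) > f_{i-1}(w_i)$, where the painter must prevent $w_i$'s palette from shrinking on this step, i.e., select a color at $u_i$ whose DP-match at $w_i$ is either absent or already inadmissible. By the invariant, $u_i$ has at least $f_{i-1}(u_i)+1 \ge f_{i-1}(w_i)+2$ admissible colors, while the matching on the edge $u_iw_i$ associates at most $f_{i-1}(w_i)+1$ of them with an admissible color at $w_i$ (one per admissible color at $w_i$); pigeonhole supplies the required choice, justifying the $+\delta_{w_i}$ correction in the scheme.

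The main obstacle I anticipate is the online nature of DP-painting: the adversary interleaves fresh reveals with the painter's moves, so the invariant has to be stated robustly enough to survive arbitrary adversary activity, and one must argue that the painter may defer coloring $u_i$ until all earlier targets $u_n, \dots, u_{i+1}$ have been painted. I expect the pigeonhole step to transfer verbatim to the online setting because it depends only on the matching between $u_i$ and $w_i$ that is visible at the moment the painter acts; the real work will be in formalizing the round structure and verifying that no adversary action between rounds can invalidate the palette-size invariant before the recipe is applied.
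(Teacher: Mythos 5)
The paper does not prove this proposition at all: it is quoted verbatim from Bernshteyn and Lee \cite{Bernshteyn2021a}, and the authors explicitly decline even to define $\chi_{\ell}$, $\chi_{\textsf{DP}}$ and $\chi_{\textsf{DPP}}$, referring to \cite{MR4117373}. So there is no in-paper argument to match, and your sketch must stand on its own. Your opening paragraph on the first three inequalities is fine, and the palette-counting invariant together with the pigeonhole step for the deletion-save move is exactly the right mechanism. But you have the order backwards. In this formalism the removal order \emph{is} the coloring order: legality of $\theta_i$ means $f_{i-1}(u_i)\ge 0$, where $f_{i-1}(u_i)=f(u_i)-|N(u_i)\cap\{u_1,\dots,u_{i-1}\}|$ up to the saved increments, i.e.\ $u_i$ still has an admissible color after its \emph{previously removed} neighbors have been colored. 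The invariant you state --- every $v\in V(G_{i-1})$ has at least $f_{i-1}(v)+1$ admissible colors --- only makes sense if $u_1,\dots,u_{i-1}$ are the already-colored vertices and $V(G_{i-1})=\{u_i,\dots,u_n\}$ are the uncolored ones; under your announced reverse order $u_n,\dots,u_1$ the set $V(G_{i-1})$ would contain vertices you have already colored and $f_{i-1}$ would not track anything. As written the plan is internally inconsistent, though the fix (color in the order $u_1,\dots,u_n$) is immediate.

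The more substantial gap is the final paragraph. After fixing the order, your argument proves $\chi_{\textsf{DP}}(G)\le \textsf{wd}(G)+1$, the \emph{offline} bound, where the Painter is free to color vertices one at a time in an order of its choosing. In the DP-paint game the Lister chooses, in each round, a set $M$ of uncolored vertices (each of which loses a token whether or not it gets colored) together with a conflict graph on $M$, and the Painter must answer with an independent set of that round's conflict graph. The Painter therefore cannot ``defer coloring $u_i$ until all earlier targets have been painted'': every round in which $u_i$ is marked but not colored costs it a token, and the Lister controls which vertices are marked. The pigeonhole step does not ``transfer verbatim''; what is needed is an induction on the game position showing that for every Lister move there is an independent set $I$ (chosen using the removal scheme as a priority order) such that the residual graph $G-I$ is again weakly degenerate with respect to the appropriately decremented function. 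That inductive preservation statement is the actual content of Bernshteyn and Lee's proof of $\chi_{\textsf{DPP}}(G)\le\textsf{wd}(G)+1$, and it is the piece missing from your sketch.
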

	
	Some well-known upper bounds for $\chi_\textsf{DP}(G)$ for families of graphs turn out to be upper bounds for $\textsf{wd}(G)+1$. For example, Bernshteyn and Lee \cite{Bernshteyn2021a} proved that planar graphs are weakly 4-degenerate and Brooks theorem remains true for weak degeneracy.

	It was proved by Thomassen \cite{MR1328294} that planar graphs of girth at least $5$ are 3-choosable. Dvo\v{r}\'{a}k and Postle \cite{MR3758240} observed that planar graphs with girth at least $5$ are DP-$3$-colorable. This paper strengthens this result and show that planar graphs of girth at least $5$ are weakly 2-degenerate. Indeed, we shall prove graphs in a slightly larger graph family are weakly 2-degenerate.
	
	We write $P= v_{1}v_{2}\dots v_{s}$ to indicate that $P$ is a path with vertices $v_1,v_2, \dots, v_s$ in this order, and write $K = (v_1v_2\dots v_k)$ to indicate that $K$ is a cycle with vertices $v_1,v_2, \dots, v_k$ in this cyclic order.  For convenience, we also denote by $P$ and $K$ the vertex sets of $P$ and $K$, respectively. The \emph{length} of a path or a cycle is the number of edges in the path or cycle.  A $k$-cycle (respectively, a $k^-$-cycle or a $k^+$-cycle) is a cycle of length $k$ (respectively, at most $k$ or at least $k$). Two cycles are \emph{adjacent} if they share some common edges, and we say they are \emph{normally adjacent} if their intersection is isomorphic to $K_{2}$. Let $\mathcal{G}$ denote the class of triangle-free plane graphs in which no 4-cycle is normally adjacent to a $5^-$-cycle. Dvo\v{r}\'{a}k, Lidick\'{y} and \v{S}krekovski \cite{MR2680225} proved that every graph in $\mathcal{G}$ is $3$-choosable. 
	In this paper, we prove that every graph in $\mathcal{G}$ is weakly $2$-degenerate. The proof uses induction, and for this purpose, we prove a stronger and more technical result.

	For a plane graph and a cycle $K$, we use $\int(K)$ to denote the set of vertices in the interior of $K$, and $\ext(K)$ to denote the set of vertices in the exterior of $K$. Denote by $\int[K]$ and $\ext[K]$ the subgraph of $G$ induced by $\int(K) \cup K$ and $\ext(K)  \cup K$, respectively. For the plane graph $G$, we denote by $B(G)$ the boundary walk of the infinite face of $G$.

	\begin{theorem}\label{MAINRESULT}
		Let $G \in \mathcal{G}$, and $P=p_{1}p_{2}\dots p_{s}$ be a path on $B(G)$ with at most   four vertices. 
		Let $f \in \mathbb{Z}^{G}$ be a function satisfying the following conditions:   
		\begin{enumerate}[label = (\roman*)]
			\item 
			$f(p_i)=0$   for $1 \le i \le s$,    
		  $f(v) = 2$ for all $v \notin B(G)$, and 
	 $1 \leq f(v) \leq 2$ for all $v \in B(G)\setminus V(P)$;
			\item ${I} = \{v \mid f(v) = 1\}$ is an independent set in $G$, and each vertex in ${I}$ has at most one neighbor in $P$. 
		\end{enumerate}
		Then $G-E[P]$ is weakly $f$-degenerate. 
	\end{theorem}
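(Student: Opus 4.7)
The approach is induction on $|V(G)|$, in the style of Thomassen's proof for $3$-choosability of planar graphs of girth $5$, but adapted to the weak-degeneracy setting where moves are either $\dd$ or $\ds$. The base case is $V(G) = V(P)$: then $G - E[P]$ has no edges and each remaining vertex has $f$-value $0$, so the empty removal scheme works. Otherwise I take a minimum counterexample $(G, P, f)$ and proceed through several reductions before the main local analysis.

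First I would handle the standard connectivity reductions. If $G$ is disconnected or has a cut vertex, I split $G$ along the separator, apply the induction hypothesis to each piece (treating an appropriate extension of $P$ as the precolored path in the piece containing $P$, and letting the separator itself act as a trivial precolored path in any other piece), and concatenate the resulting removal schemes. Membership in $\mathcal{G}$ is hereditary under taking subgraphs of this form, so hypotheses (i)–(ii) are preserved. Next I would rule out chords of the outer cycle $C = B(G)$ and small separating cycles: given a separating $4$- or $5$-cycle $K$, I would apply induction to $\ext[K]$ (still containing $P$), record the $f$-values that the resulting scheme forces on $K$, and then apply induction to $\int[K]$ after verifying that the induced $f$ and the updated set $I$ on $\int[K]$ still satisfy~(i)–(ii). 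So I may assume $G$ is $2$-connected with induced outer cycle $C \supseteq V(P)$ and no small separating cycle.

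The heart of the proof is then a reduction near $P$. I would enumerate the local configurations of faces and vertices on $C$ close to $P$, and in each configuration identify a vertex $v \in C \setminus V(P)$ — typically adjacent to an endpoint $p_s$ of $P$, or to a vertex of $I \cap C$ that neighbors $P$ — together with a short sequence of $\dd$ and $\ds$ moves that removes $v$ and extends $P$ to include $v$, so that the new endpoint has $f$-value $0$ in the smaller instance. When $v$ has $f(v) = 2$ and a neighbor $w$ with $f(w) = 1$, the move $\ds(v, w)$ is the right one, since it preserves $f(w)$ so that $w$ remains useful; when $v$ itself has $f(v) = 1$, the edge to $p_s$ already absorbs the unit and $\dd(v)$ is legal. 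In each configuration I must also track the decrements forced on the internal neighbors of $v$, which may become new boundary vertices, and verify that the new set $I' = \{u : f'(u) = 1\}$ is still independent and each of its members still has at most one neighbor in the extended precolored path.

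The main obstacle I expect is precisely this case analysis. The conditions on $I$ are fragile under the moves: removing a single boundary vertex $v$ can promote an internal neighbor of $v$ to the new boundary with $f$-value $1$, potentially adjacent to another $1$-vertex, or with two neighbors in the newly extended $P$. Ruling out such bad promotions is where the structural hypothesis that no $4$-cycle is normally adjacent to a $5^-$-cycle is used most sharply; together with triangle-freeness and the girth-related assumptions inherited from $\mathcal{G}$, it forbids the worst local arrangements of short faces around $p_s$ and confines the enumeration to a bounded, tractable list of patterns — though each of these patterns must still be checked by hand to exhibit the correct $\dd$/$\ds$ sequence.
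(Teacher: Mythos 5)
Your outline matches the paper's overall strategy---minimum counterexample, reduction to the $2$-connected case, exclusion of chords and short separating cycles, then a reducible configuration near the end of $P$ handled by a mix of $\dd$ and $\ds$ moves---but as a proof it has a genuine gap: the entire central case analysis is deferred (``each of these patterns must still be checked by hand''), and that analysis is where essentially all of the work lies. In the paper this step consists of Lemmas~\ref{2-chord}--\ref{3-chord} on the structure of $2$- and $3$-chords, the explicit definition of a boundary set $X$ (seven cases, according to the pattern of $f$-values on the vertices $x_1,x_2,\dots$ following $p_4$ on $B(G)$), an auxiliary set $Y$ of interior vertices of short chords joining vertices of $X$, a concrete removal scheme $\Omega$ for each case, and the verification via Lemma~\ref{lem-2} that $(G_\Omega,P,f_\Omega)$ again satisfies (i)--(ii). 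Without exhibiting these configurations and schemes, the claim that the enumeration is ``bounded and tractable'' is an assertion rather than an argument, and it is exactly the fragility of condition (ii) that you yourself flag which makes the verification nontrivial.

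Two further concrete problems. First, you induct on $|V(G)|$ alone, but the reducible configurations only arise when a vertex of $f$-value $1$ sits at or next to the end of $P$ (i.e., $f(x_1)=1$ or $f(x_2)=1$); to force this one lowers $f(x_1)$ from $2$ to $1$ and invokes a \emph{secondary} minimality, namely minimality of $\sum_{v\notin V(P)}f(v)$. With induction on $|V(G)|$ only, this reduction is unavailable, so you would need separate reducible configurations for the case $f(x_1)=f(x_2)=2$. Second, ``extending $P$ to include $v$'' conflicts with the hypothesis that $P$ has at most four vertices ($P$ is already a four-vertex path in the critical case), so the smaller instance would not satisfy the inductive hypothesis; the paper instead keeps $P$ fixed and genuinely deletes the vertices of $X\cup Y$, which is legitimate because by Proposition~\ref{remark1} a vertex of $f$-value $0$ outside $P$ may simply be removed. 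One of these devices must be made explicit for the induction to close.
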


	The following is an easy consequence of Theorem~\ref{MAINRESULT}.

	\begin{corollary}
		Every graph in $\mathcal{G}$ is weakly $2$-degenerate. In particular, every planar graph of girth at least $5$ is weakly $2$-degenerate. 
	\end{corollary}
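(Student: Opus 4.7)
First, reduce the ``in particular'' claim to the main one: a planar graph of girth at least $5$ has no $3$- or $4$-cycles, so it is trivially triangle-free and the $4$-cycle condition is vacuous, placing it in $\mathcal{G}$. It therefore suffices to show every $G \in \mathcal{G}$ is weakly $2$-degenerate.

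Assuming $G \in \mathcal{G}$ is non-empty, I would pick any vertex $p_1$ on the boundary walk $B(G)$ and apply Theorem~\ref{MAINRESULT} with the one-vertex path $P = p_1$ and the function $f$ defined by $f(p_1) = 0$ and $f(v) = 2$ for $v \neq p_1$. Conditions~(i) and~(ii) hold trivially with $I = \emptyset$, and since $E[P] = \emptyset$ the theorem produces a legal removal scheme $\Omega = \theta_1, \ldots, \theta_n$ for $(G, f)$. The remaining goal is to convert $\Omega$ into a scheme $\Omega'$ legal for $(G, \mathbf{2})$, where $\mathbf{2}$ denotes the constant $2$ function on $V(G)$.

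The conversion rests on a structural observation about $\Omega$: the running value of $p_1$ stays at $0$ until $p_1$ is deleted (no move can increase it), so every move of $\Omega$ touching $p_1$ beforehand must be a save $\ds(u, p_1)$ with $u \in N_G(p_1)$, and $p_1$ itself is removed by $\dd(p_1)$. Let $J$ denote the set of indices of such saves in $\Omega$ and set $k := \min(|J|, 2)$. I would define $\Omega'$ by replacing the first $k$ save-moves $\ds(u_i, p_1)$ appearing in $\Omega$ by the corresponding $\dd(u_i)$, leaving every other move unchanged.

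To verify that $\Omega'$ is legal for $(G, \mathbf{2})$, I would track the discrepancy $f^{*} - f$ between the running function $f^{*}$ of $\Omega'$ and the running function $f$ of $\Omega$: initially it equals $2 \delta_{p_1}$, it drops by $1$ at $p_1$ after each of the $k \le 2$ replacements, and stays at $(2 - k) \delta_{p_1}$ thereafter until $\dd(p_1)$ is executed. Each replaced $\dd(u_i)$ is legal in $(G, \mathbf{2})$ because the surplus at $p_1$ absorbs the extra drop while the other non-negativity conditions are inherited from the legality of $\ds(u_i, p_1)$ in $\Omega$; every unreplaced move of $\Omega'$ is legal because any post-replacement save $\ds(u, p_1)$ (possible only when $k = 2$) arises once the surplus at $p_1$ has been fully exhausted, and every other comparison $f^{*}(u) > f^{*}(w)$ with $w \neq p_1$ reduces to its counterpart in $\Omega$. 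The main subtlety --- and the only real obstacle I foresee --- is verifying that the final $\dd(p_1)$ of $\Omega'$ remains legal when $|J| < 2$ and $p_1$ still carries positive surplus; this reduces to the observation that the legality of $\dd(p_1)$ in $\Omega$ already forces every remaining neighbor of $p_1$ to have value at least $1$, a condition inherited by $\Omega'$ because the discrepancy vanishes off $p_1$.
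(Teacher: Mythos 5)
Your proposal is correct and follows the route the paper intends (the paper leaves the corollary's proof implicit as an "easy consequence"): instantiate Theorem~\ref{MAINRESULT} with a single-vertex path $P=p_1$ and $f(p_1)=0$, $f\equiv 2$ elsewhere, then upgrade to the constant function $2$. The upgrading step you carry out by hand is just the monotonicity of weak degeneracy (if $f\le g$ pointwise and $G$ is weakly $f$-degenerate then it is weakly $g$-degenerate), which is proved in the cited Bernshteyn--Lee paper by exactly your save-to-delete conversion, so your explicit verification is sound but could be replaced by a citation.
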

	
 {The proof of Theorem 
 \ref{MAINRESULT} uses induction, and follows a similar line  as the proof of the 3-choosability of these graphs in \cite{MR2680225}. Indeed, the idea of DeleteSave operation 
  was used in some cases in \cite{MR2680225} (as well as in many other papers on list colouring of graphs), although the term  DeleteSave   was not used explicitly.  Nevertheless,  the proof of Theorem  \ref{MAINRESULT}  requires rather different treatments in some cases. The conclusion that these graphs are weakly 2-degenerate is intrinsically  stronger. For example, it implies that these graphs are DP $3$-paintable, and  the proof in \cite{MR2680225} does not apply to DP-coloring. } 
	
	Assume $S$ is a surface and $G$ is a graph embedded in $S$. A cycle $C$ in $G$ is \emph{contractible} if, as a closed curve on $S$, it separates $S$ into two parts, and one part is homeomorphic to the disc. We say $C$ is \emph{non-contractible} otherwise. The length of the shortest non-contractible cycle in $G$ is called the \emph{edge-width} of $G$ and is denoted by $\ew(G)$. Note that if $S$ is the sphere, then every closed curve in $S$ is contractible, and hence $\ew(G) = \infty$ for any graph $G$ embedded in $S$. We say a graph $G$ embedded in a surface $S$ is \emph{``locally planar''} if $\ew(G)$ is ``large''. It was proved by Thomassen \cite{MR1234386} that for any surface $S$, there is a constant $\w$ such that any graph $G$ embedded in $S$ with $\ew(G) \ge \w$ is $5$-colorable. Roughly speaking, this result says that locally planar graphs are $5$-colorable. 
	This result was strengthened in a sequence of papers, where it was proved that locally planar graphs are $5$-choosable \cite{MR2462315}, $5$-paintable \cite{MR3351695} and DP $5$-paintable \cite{MR4117373}. In this paper, we further strengthen this result by proving the following result.
	
	\begin{theorem}
		\label{thm-main2}
		For any surface $S$, there is a constant $\w(S)$ such that every graph $G$ embedded in $S$ with edge-width at least $\w(S)$ is weakly $4$-degenerate.
	\end{theorem}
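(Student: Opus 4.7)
The plan follows the framework established for extending planar list-coloring results to locally planar graphs, most notably in the DeVos--Kawarabayashi--Mohar proof of $5$-choosability \cite{MR2462315} and its subsequent refinements to paintability \cite{MR3351695} and DP-paintability \cite{MR4117373}. The first ingredient is a planar extension lemma strengthening the Bernshteyn--Lee \cite{Bernshteyn2021a} result that planar graphs are weakly $4$-degenerate: for every plane graph $G$ and every face $F$ of $G$, if $f \in \mathbb{Z}^G$ satisfies $f(v)=0$ on $V(B(F))$, $f(v)=4$ on $V(G)\setminus V(B(F))$, and a technical boundary condition in the spirit of Theorem~\ref{MAINRESULT}(ii) holds, then $G - E[B(F)]$ is weakly $f$-degenerate. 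I would prove this by induction on $|V(G)|$, following the same high-level strategy (near-boundary case analysis with legal $\dd$ and $\ds$ moves) as in the proof of Theorem~\ref{MAINRESULT}, but with a budget of $4$ per interior vertex instead of $2$.

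With the planar extension lemma in hand, I would call a surface configuration $(G,F,f)$ on $S$ \emph{critical} if $G-E[B(F)]$ is not weakly $f$-degenerate but every proper embedded subgraph with inherited boundary data is. The heart of the proof is then to establish a linear isoperimetric inequality for critical configurations: there exists a constant $C=C(S)$ such that $|V(G)| \le C\cdot |V(B(F))|$ for every critical $(G,F,f)$ on $S$. The inequality would be proved by a discharging argument using Euler's formula on $S$; the reducible configurations forbidden in a critical graph are precisely those that, after a legal sequence of removal moves, reduce to a strictly smaller instance still satisfying the hypotheses of the planar extension lemma.

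Finally, I would close the loop by the standard locally planar mechanism. If $\ew(G)\ge \w(S)$, then large edge-width guarantees the existence of wide concentric annular regions around any non-contractible cycle, each of which is effectively planar; the linear isoperimetric inequality forbids a critical subgraph from fitting inside any such region, so one can peel off planar annuli via the extension lemma and reduce to a graph of strictly smaller Euler genus, finishing by induction on the genus of $S$. Choosing $\w(S)$ so that the annuli are wider than $C(S)$ times any bounded boundary length yields the constant in the statement.

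The main obstacle will be the isoperimetric inequality in the second step. Weak degeneracy is more restrictive than DP-coloring because the legality condition $f(u)>f(w)$ in a $\ds(u,w)$ move forces the reduction to track exact $f$-values rather than just list sizes; consequently the inventory of reducible configurations available for the discharging step is more constrained than in \cite{MR2462315} or \cite{MR4117373}. Calibrating the technical boundary condition of the planar extension lemma tightly enough to be preserved under every reduction used in the discharging argument, while still matching the data produced by the locally-planar cutting in the third step, is the delicate design task of the proof.
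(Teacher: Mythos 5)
Your outline is a legitimately different strategy from the paper's (a criticality/isoperimetric/genus-induction scheme versus the paper's reduction to $5$-connected triangulations and the explicit $H$-scheme decomposition of \cite{MR3351695}), but as written it has two genuine gaps. First, your ``planar extension lemma'' is too strong: with $f\equiv 0$ on the entire facial boundary $B(F)$ and $f\equiv 4$ inside, the statement is false in general --- every boundary vertex must be removed by a pure deletion (no \ds{} move is legal from a vertex with $f=0$), so an interior vertex with five or more neighbours on $B(F)$ is driven negative, and no independence-type condition in the spirit of Theorem~\ref{MAINRESULT}(ii) repairs this. The correct planar ingredient, and the one the paper actually uses, is Lemma~\ref{planarlem} of Bernshteyn--Lee \cite{Bernshteyn2021a}: only \emph{two consecutive} boundary vertices get $0$, the rest of the boundary gets $2$, and the interior gets $4$. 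Any cutting scheme you design in the third step has to produce boundary data matching that lemma (at most two zeros, twos elsewhere), not an all-zero facial cycle.

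Second, the linear isoperimetric inequality for ``weak-degeneracy-critical'' configurations is not a routine discharging exercise; it is the entire difficulty of your approach and you have not supplied it. Such hyperbolicity statements are known for $5$-list-critical graphs, but transferring them to weak degeneracy is delicate precisely for the reason you note: the legality condition $f(u)>f(w)$ for $\ds(u,w)$ means reducible configurations must be certified by explicit removal schemes tracking exact $f$-values, and criticality does not interact with subgraph deletion the way it does for colourings (one cannot simply restrict a ``colouring'' of a proper subgraph). The paper sidesteps this entirely: it triangulates a minimal counterexample (adding chimneys to large faces), observes that a non-$5$-connected triangulation of large edge-width has a contractible separating $4^-$-cycle whose interior can be peeled off by Lemma~\ref{planarlem}, and then handles the $5$-connected case by Theorem~\ref{thmfw}, where the $H$-scheme of Lemma~\ref{keylem} partitions $V(G)$ into five layers $G_1,\dots,G_5$ that are removed in order, with the oriented bipartite graph $R$ dictating exactly which \ds{} moves are legal. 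If you want to pursue your route, you would need to either prove the isoperimetric inequality for weak degeneracy from scratch or find a way to import the existing structure theorems, at which point you are essentially back to the paper's argument.
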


\section{Some preliminaries}
For $\Omega= \del( \theta_1,\theta_2, \dots, \theta_k)$ and $(G,f) \in \mathcal{L}$, let $$(G_{\Omega}, f_{\Omega}) = \del( \theta_1,\theta_2, \dots, \theta_k)(G,f).$$

If $\Omega=\del(\theta_1, \dots, \theta_k)$ and for each $i$, either $\theta_i= \langle u_i \rangle$ or $\theta_i=\langle u_i,w_i \rangle$, then let $U_{\Omega}=\{u_1,u_2,\dots, u_k\}$. Note that for any removal scheme $\Omega$, we have $G_{\Omega} = G-U_{\Omega}$ and $f_{\Omega} \ge f_{-U_\Omega}$. If $G[U]$ is weakly $f$-degenerate, then there is a removal scheme $\Omega$ legal for $(G[U],f)$ with $U_{\Omega}=U$.Thus we have the following observation. 

\begin{observation}
\label{obs-aa}
If $G-U$ is weakly $f_{-U}$-degenerate, then $G$ is weakly $f$-degenerate if and only if $G[U]$ is weakly $f$-degenerate. In particular, if $ f(x) \geq \deg_{G}(x)$, then $G$ is weakly $f$-degenerate if and only if $G-x$ is weakly $f$-degenerate.
\end{observation}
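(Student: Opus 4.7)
The observation has two parts: the general iff for a subset $U$ and its specialization to $U=\{x\}$. My plan rests on the principle highlighted just before the observation: for any scheme $\Omega$ we have $G_\Omega=G-U_\Omega$ and $f_\Omega\geq f_{-U_\Omega}$, and when $G[U]$ is weakly $f$-degenerate a scheme $\Omega_1$ on $(G[U],f|_U)$ with $U_{\Omega_1}=U$ can be chosen. For the forward direction of the iff, I would take such an $\Omega_1$ together with a legal scheme $\Omega_2$ for $(G-U,f_{-U})$ and argue that the concatenation $\Omega_1\Omega_2$ is legal for $(G,f)$. The key check is that $\Omega_1$ remains legal when executed on $(G,f)$: every move of $\Omega_1$ acts along a $G[U]$-edge with both removed vertex and save target inside $U$, so for every $u\in U$ the value at $u$ evolves step-by-step identically to the $(G[U],f|_U)$-execution, inheriting every non-negativity and every strict inequality $f(u)>f(w)$. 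For each $v\in V(G)\setminus U$, the value loses exactly $|N_G(v)\cap U|$ over the whole of $\Omega_1$ (all saves land inside $U$), ending at $f_{-U}(v)\geq 0$ with all intermediate values non-negative too. The state is then $(G-U,f_{-U})$, from which $\Omega_2$ finishes.

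For the reverse direction, given a legal scheme $\Omega$ for $(G,f)$ I would extract a scheme $\Omega'$ for $(G[U],f|_U)$ by keeping only the moves of $\Omega$ that remove $U$-vertices and replacing each $\ds(u,w)$ with $w\notin U$ by the plain $\dd(u)$. In the restricted execution, the value at each $u\in U$ at the corresponding step dominates its value in $\Omega$, because the restriction omits exactly the decrements to $u$ caused by removals of non-$U$ neighbors; this dominance preserves non-negativity throughout. The ``in particular'' statement is the $U=\{x\}$ specialization: $G[\{x\}]$ is trivially weakly $f$-degenerate as soon as $f(x)\geq 0$, which is guaranteed by $f(x)\geq \deg_G(x)\geq 0$, and the equivalence of the two formulations (using $f_{-x}$ in the general statement versus $f|_{V(G)-x}$ in the specialization) is reconciled by the slack $f(x)-\deg_G(x)$ that the final $\dd(x)$ absorbs.

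The main obstacle, and the place warranting the most care, is the bookkeeping of values across two schemes. In the forward direction the delicate point is that the monotonically decreasing values on $V(G)\setminus U$ during $\Omega_1$ never become negative, which is ensured exactly by $f_{-U}\geq 0$ coming from the weak $f_{-U}$-degeneracy of $G-U$. In the reverse direction the delicate point is the preservation of the strict inequalities $f(u)>f(w)$ for retained $U$-to-$U$ saves: the ``bonuses'' at $u$ and $w$ coming from the omitted non-$U$ moves can differ, so a careful comparison of the two evolutions is needed to confirm that the inequalities remain valid in $\Omega'$.
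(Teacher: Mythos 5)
Your forward direction is sound and is essentially the argument the paper intends: run a scheme for $(G[U],f|_U)$ on $(G,f)$ --- the values on $U$ evolve exactly as in the $G[U]$-run, while the values outside $U$ decrease monotonically to $f_{-U}\ge 0$ --- and then finish with a scheme for $(G-U,f_{-U})$. The gap is in the reverse direction. You keep every $U$-to-$U$ save verbatim and hope that ``a careful comparison'' will confirm that the strict inequalities survive; in fact they need not. Writing $a(v)$ for the value of $v\in U$ in the run of $\Omega$ on $(G,f)$ and $b(v)$ for its value in the restricted run on $(G[U],f|_U)$ at the corresponding moment, one only gets the one-sided bound $b(v)\ge a(v)$, and the surpluses $b(u)-a(u)$ and $b(w)-a(w)$ can differ (for instance $w$ may already have lost several neighbours outside $U$ while $u$ has lost none), so $a(u)>a(w)$ does not imply $b(u)>b(w)$, and a retained move $\ds(u,w)$ can be illegal in the restricted run. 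The repair is the standard adaptive one: whenever $b(u)\le b(w)$ at the time of the move, demote $\ds(u,w)$ to $\dd(u)$; this preserves the invariant $b\ge a$, because then $b(w)\ge b(u)\ge a(u)\ge a(w)+1$, so $b(w)-1\ge a(w)$ after the plain deletion, and non-negativity of all intermediate values follows from $b\ge a\ge 0$. Without this step (or an equivalent one) the reverse implication is not established.

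Two smaller points. First, your reading of the ``in particular'' clause has $U$ and its complement swapped: with $U=\{x\}$ the hypothesis of the general statement would be that $G-x$ is weakly $f_{-x}$-degenerate, which is not available; the intended instance is $U=V(G)\setminus\{x\}$, so that $G-U$ is the single vertex $x$ with $f_{-U}(x)=f(x)-\deg_G(x)\ge 0$, trivially weakly degenerate, which yields the equivalence with $G[U]=G-x$ under the restriction of $f$ --- your remark about the final $\dd(x)$ absorbing the slack is exactly this instance, so relabel accordingly. Second, for calibration: the paper offers no proof of this observation beyond the two-sentence remark preceding it, which really only justifies the forward direction; your write-up is more explicit than the source, but it still needs the demotion argument above to cover the ``only if'' half.
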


\begin{observation}
\label{obs-bb}
The following follows from the definition.
		 	\begin{enumerate}
		 	    \item
	            If $uv \notin E(G)$  and 
		 	  $\del(\langle u \rangle, \langle v \rangle )$ is legal for $(G,f)$, then $\del(\langle v \rangle, \langle u \rangle )$ is legal for $(G,f)$, and  $\del(\langle u \rangle, \langle v \rangle )(G,f) =  \del(\langle v \rangle, \langle u \rangle )(G,f)$.
		 	  	\item
		 	  	If $uv \notin E(G)$, and 
		 	  	$\del(\langle u,w \rangle, \langle v \rangle )$ is legal for $(G,f)$, then $\del(\langle v \rangle, \langle u,w \rangle )$ is legal for $(G,f)$, and  $\del(\langle u,w \rangle, \langle v \rangle )(G,f) =  \del(\langle v \rangle, \langle u,w \rangle )(G,f)$.
		 		\item
                If $\del(\langle u,v \rangle, \langle v \rangle )$ is legal for $(G,f)$, then $\del(\langle v \rangle, \langle u \rangle )$ is legal for $(G,f)$, and  $\del(\langle u, v \rangle, \langle v \rangle)(G,f) = \del(\langle v \rangle, \langle u \rangle )(G,f)$.    
		 	\end{enumerate}
\end{observation}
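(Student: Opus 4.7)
The plan is to prove each of the three parts by a direct verification that unpacks the definitions of $\dd$ and $\ds$. For each part I need to check two things: that the two removal schemes produce the same pair in $\mathcal{L}$, and that legality of the original scheme implies legality of the reversed scheme.

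First I would observe that both orderings remove the vertex set $\{u,v\}$, so the underlying graph is always $G - \{u,v\}$; only the resulting function needs comparison. For a remaining vertex $x \in V(G) \setminus \{u,v\}$, each composition can be expanded as a linear combination of $f(x)$, the edge-indicators $|N_G(x) \cap \{u\}|$ and $|N_G(x) \cap \{v\}|$, and possibly the correction $\delta_w(x)$. In parts (1) and (2), the hypothesis $uv \notin E(G)$ means that removing $u$ does not change the $f$-value at $v$ and vice versa, so the two neighbor-count subtractions commute and the common final value is $f(x) - |N_G(x) \cap \{u,v\}|$ (plus $\delta_w(x)$ in part (2)). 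Note also that in part (2), since $\ds(u,w)$ requires $uw \in E(G)$ and $uv \notin E(G)$, one has $w \ne v$, so the save $\delta_w$ is unaffected by either deletion. In part (3) I would compute that the $+1$ contributed by $\delta_v$ is exactly cancelled by the $-1$ that $v$ receives from $u$ being its neighbor, so the net change on any remaining $x$ is again $-|N_G(x) \cap \{u,v\}|$, matching two plain deletions.

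Next I would verify legality of the reversed scheme. The strategy is uniform: the strictest non-negativity requirement already appears in the original order, namely on the final function $f_\Omega$, and the intermediate functions in the reversed order dominate $f_\Omega$ pointwise (because subtracting fewer neighbor-indicators or adding $\delta_w$ only raises the values). So non-negativity propagates. In part (2), the additional condition $f_{-v}(u) > f_{-v}(w)$ for the reversed $\ds(u,w)$ follows from $f(u) > f(w)$ together with $f_{-v}(u) = f(u)$ (since $uv \notin E(G)$) and $f_{-v}(w) \le f(w)$. In part (3), the only subtle point is that $f_{-v}(u) \ge 0$ must hold for the reversed $\dd(v)$ to be legal at $u$; but since $uv \in E(G)$ this amounts to $f(u) \ge 1$, which is forced by $f(u) > f(v) \ge 0$ from the original $\ds(u,v)$ step.

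The main hazard is the bookkeeping in part (3), where the save vertex and the second deleted vertex coincide. The key identity to keep in one's head is $(f_{-u} + \delta_v)_{-v} = (f_{-v})_{-u}$ on $V(G) \setminus \{u,v\}$, and the key legality fact is that the strict inequality $f(u) > f(v)$ imposed by $\ds(u,v)$ is precisely what makes the reversed first move $\dd(v)$ legal at $u$. Once these two identities are written out, all three parts are one-line consequences of the definitions, which is why the authors remark that the observation follows directly.
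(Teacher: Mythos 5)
Your verification is correct, and it is exactly the routine unpacking of the definitions of $\dd$ and $\ds$ that the paper has in mind: the paper offers no written proof beyond the remark that the observation ``follows from the definition.'' You correctly isolate the two genuinely non-trivial points, namely that $w \ne v$ in part (2) because $uw \in E(G)$ while $uv \notin E(G)$, and that in part (3) the strict inequality $f(u) > f(v) \ge 0$ is what guarantees $f_{-v}(u) = f(u) - 1 \ge 0$ for the reversed first move.
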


	\begin{proposition}
		\label{remark1}
		If $f(v) = 0$, then 
		$G$ is weakly $f$-degenerate if and only if $G-v$ is weakly $f_{-v}$-degenerate.
	\end{proposition}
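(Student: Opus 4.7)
The plan is to handle the two directions separately. The backward direction is essentially immediate: the existence of a legal scheme for $(G-v, f_{-v})$ forces $f_{-v}$ to be non-negative on $V(G)\setminus\{v\}$, so every neighbour $x$ of $v$ already satisfies $f(x)\ge 1$. Combined with $f(v)=0$, this makes $\dd(v)$ legal for $(G,f)$, and prepending $\dd(v)$ to any scheme for $(G-v, f_{-v})$ yields a legal scheme for $(G,f)$ that removes every vertex. One may also view this as Observation~\ref{obs-aa} applied with $U=\{v\}$, since the one-vertex graph $G[\{v\}]$ with $f(v)=0$ is trivially weakly $f$-degenerate.

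For the forward direction, fix a legal scheme $\Omega=\del(\theta_1,\dots,\theta_n)$ for $(G,f)$ and let $g_j$ denote the function produced after the first $j$ moves. I would first note that while $v$ is still in the graph, $g_j(v)$ is non-increasing in $j$: the only move that could potentially raise it is $\ds(u,v)$ with $u\in N(v)$, whose $+1$ contribution from $\delta_v$ is exactly cancelled by the $-1$ from deleting the neighbour $u$. Hence $g_j(v)=0$ for every $j$ before $v$ is removed. Two consequences follow: the move that removes $v$ must be $\dd(v)$ (a $\ds(v,w)$ would require $g_j(v)>g_j(w)\ge 0$), so write $\theta_i=\dd(v)$; and each earlier move $\theta_{j+1}$ must preserve $g(v)=0$, which restricts it to exactly one of three shapes --- (a) $\dd(u)$ with $u\notin N(v)$, (b) $\ds(u,w)$ with $u\notin N(v)$ (which forces $w\ne v$ via $uw\in E(G)$), or (c) $\ds(u,v)$ with $u\in N(v)$.

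The final step is to push $\theta_i=\dd(v)$ to the front of $\Omega$ by repeatedly applying Observation~\ref{obs-bb}: in cases (a) and (b), parts~(1) and~(2) of the observation swap $\dd(v)$ past a move whose deleted vertex is a non-neighbour of $v$, while in case (c) part~(3) rewrites the pair $\ds(u,v),\,\dd(v)$ as $\dd(v),\,\dd(u)$. Iterating yields an equivalent legal scheme of the form $\del(\dd(v),\theta_2',\dots,\theta_n')$; discarding the first move leaves a legal scheme for $(G-v, f_{-v})$ that removes all remaining vertices, which is precisely what is needed. The main obstacle in carrying this out is the three-case classification above: one must verify carefully that $g_j(v)=0$ really excludes every alternative move (in particular any $\ds(u,w)$ with $u\in N(v)$ and $w\ne v$), because that classification is exactly what matches the three parts of Observation~\ref{obs-bb} to every move that arises during the commutation.
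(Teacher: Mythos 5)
Your proof is correct and follows essentially the same route as the paper's: the backward direction prepends $\del(\langle v\rangle)$, and the forward direction uses the fact that $v$ retains value $0$ to force every neighbour-removing move to be of the form $\langle u,v\rangle$, then commutes $\langle v\rangle$ to the front via Observation~\ref{obs-bb}, turning each $\langle u,v\rangle$ into $\langle u\rangle$. Your explicit three-case classification of the moves preceding the deletion of $v$ is just a more detailed justification of the step the paper states tersely.
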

	\begin{proof}
		If
		$\Omega=\del(\theta_1, \dots, \theta_n)$ is legal for $(G-v, f_{-v})$ and    removes all the vertices of $G-v$, then   
		$\Omega'=  \del(\langle v \rangle, \theta_1, \dots, \theta_n)$ is legal for $(G, f)$ and   removes all the vertices of $G$ since $\del(\langle v \rangle)(G,f)=(G-v, f_{-v})$.
		
		Conversely, assume that 
		$\Omega=\del(\theta_1, \dots, \theta_n)$ is legal for $(G, f)$ that  removes all the vertices of $G$. As 
		$f(v)=0$, $v$ is removed by a deletion operation and so there is an index $i$ such that $\theta_i= \langle v \rangle$.  
		For $j < i$, let  
		\[
		\theta'_j = \begin{cases} \langle u \rangle, &\text{ if $\theta_j=\langle u,v\rangle$}, \cr 
		\theta_j, &\text{ otherwise.}
		\end{cases}
		\]
		Note that if $uv \in E(G)$, and $u$ is removed in a move $\theta_j$ for some $j < i$, then since $f(v)=0$, we must have $\theta_j=\langle u,v \rangle$. By repeatedly applying Observation~\ref{obs-bb}, we conclude that $\Omega'=\del( \theta'_1, \dots, \theta'_{i-1},  \theta_{i+1},\dots, \theta_n)$ is legal for $(G-v, f_{-v})$ and removes all vertices of $G-v$. 
	\end{proof}

\section{Proof of Theorem~\ref{MAINRESULT}}
	It follows from Proposition~\ref{remark1} that  the conclusion of Theorem~\ref{MAINRESULT} 
	is equivalent to $G-P$ is weakly $f_{-P}$-degenerate. In the proof below, for different cases, we shall prove either of these two statements.

\begin{definition}
Assume $G$ is a plane graph and $P$ is a boundary path, and $f \in \mathbb{Z}^G$. We say $\Omega= \del( \theta_1,\theta_2, \dots, \theta_k)$ is \emph{legal} for $(G,P,f)$ if $\Omega$ is legal for $(G-P,f_{-P})$.
\end{definition}

	 It follows from the definition and Proposition~\ref{remark1} that if $\Omega$ is legal for $(G,P,f)$,  {and $G_{\Omega}-E[P]$ is weakly $f_{\Omega}$-degenerate}, then $G-E[P]$ is weakly $f$-degenerate.

	Assume Theorem~\ref{MAINRESULT} is not true, and $(G, P, f)$ is a counterexample with minimum $|V(G)| + |E(G)|$, and subject to this, with minimum $\sum_{v \in V(G)\setminus V(P)} f(v)$. To derive a contradiction, it suffices to find a removal scheme $\Omega$ legal for $(G,P,f)$, so that $(G_{\Omega}, P, f_{\Omega})$ satisfies the condition of Theorem~\ref{MAINRESULT}. Note that $\Omega$ is required to be legal for $(G,P,f)$, i.e., legal for $(G-P, f_{-P})$, and is not required to be legal for $(G,f)$. On the other hand, $\Omega$ is applied to $(G,f)$, and $G_{\Omega}$ contains the path $P$. Alternately, we may apply $\Omega$   to $(G-P,f_{-P})$. Then  to apply the induction hypothesis to the resulting graph, we need to change $(G-P)_{\Omega}$ back to $G_{\Omega}$ (i.e., add back the path $P$) and change $(f_{-P})_{\Omega}$ back to $f_{\Omega}$.

	\begin{lemma}\label{2CONNECTED}
		The graph $G$ is $2$-connected. 
	\end{lemma}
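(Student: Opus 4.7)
The plan is to argue by contradiction: assume $G$ is not $2$-connected and build a legal removal scheme for $(G-P, f_{-P})$ by concatenating inductive applications of Theorem~\ref{MAINRESULT} on strictly smaller subgraphs, contradicting the minimality of the counterexample $(G,P,f)$. I would split into two cases: $G$ is disconnected, or $G$ is connected but has a cut vertex.

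For the disconnected case, I would let $H$ be a component of $G$ disjoint from $V(P)$ (such a component exists since $P$ is connected), choose any $v \in V(H) \cap B(H)$, and set $f^*(v) = 0$ with $f^*(u) = f(u)$ otherwise. I would verify that $(H, \{v\}, f^*)$ satisfies the hypotheses of Theorem~\ref{MAINRESULT}: $H \in \mathcal{G}$ by heredity; interior vertices of $H$ are interior to $G$ and so have $f$-value $2$; vertices in $B(H) \setminus \{v\}$ either lie in $B(G) \setminus V(P)$ or are interior to $G$ and so have $f$-value in $\{1,2\}$; and $\{u : f^*(u) = 1\} \subseteq I$ inherits independence and satisfies the single-neighbor condition trivially. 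Strict smallness then yields a removal scheme $\Omega_H$ for $(H, f^*)$ removing $V(H)$, which remains legal in $(G-P, f_{-P})$ because $H$ is a disjoint component and $f_{-P}|_H = f|_H \geq f^*$. After $\Omega_H$, the remaining instance $(G - V(H), P, f|_{G - V(H)})$ is a strictly smaller instance of the theorem, admitting a completing scheme by induction; concatenation gives the contradiction.

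For the cut-vertex case, I would write $G = G_1 \cup G_2$ with $V(G_1) \cap V(G_2) = \{v\}$, $|V(G_i)| \geq 2$, and $V(P) \subseteq V(G_1)$ (arranged using that $P$ is connected). Both $G_i$ are plane graphs in $\mathcal{G}$, with $v$ on their outer face after re-embedding if necessary, and $V(P) \subseteq B(G_1)$. I would apply the theorem inductively to $(G_1, P, f|_{G_1})$ to get a scheme $\Omega_1$ removing $V(G_1) \setminus V(P)$ (in particular removing $v$ when $v \notin V(P)$), and to $(G_2, \{v\}, f^*)$ with $f^*(v) = 0$ and $f^* = f$ elsewhere to get a scheme $\Omega_2$ removing $V(G_2) \setminus \{v\}$. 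Then $\Omega_1 \Omega_2$ should be a legal removal scheme for $(G - P, f_{-P})$ removing all of $V(G) \setminus V(P)$, again yielding the desired contradiction.

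The main obstacle is the legality verification for $\Omega_1 \Omega_2$ in the cut-vertex case. For $u \in V(G_1) \setminus V(P)$ I need the starting $f$-values to match between the $(G-P)$ and $(G_1 - P)$ contexts, which follows from $V(P) \subseteq V(G_1)$ giving $N_G(u) \cap V(P) = N_{G_1}(u) \cap V(P)$; the extra neighbors appearing in $(G-P)$ at any move removing $v$ lie in $N_{G_2}(v) \setminus V(P)$ and so have $f$-value $\geq 1$. For $u \in V(G_2) \setminus \{v\}$, the $f$-value after $\Omega_1$ equals $f(u) - [u \sim v] = f^*_{-v}(u)$ because $\Omega_1$ removes $v$ and no other neighbor of $u$, so $\Omega_2$ remains legal in the new context. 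The subtle point is that executing $\Omega_1$ before $\Omega_2$ is essential — the reverse order can fail, since in the $(G-P)$ context the $f$-value of $v$ may be as low as $f(v) - |N_G(v) \cap V(P)|$ (which is $\geq 0$ by triangle-freeness, condition (ii), and $|V(P)| \leq 4$, but may be strictly less than $\deg_{G_2}(v)$).
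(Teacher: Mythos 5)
Your overall strategy --- decompose $G$ at a cut vertex, apply the inductive hypothesis to each piece, and recombine the resulting removal schemes --- is exactly the paper's approach (the paper recombines via Observation~\ref{obs-aa} rather than by concatenating schemes by hand, but that is cosmetic), and your legality bookkeeping for the two pieces is sound. Your explicit treatment of the disconnected case, which the paper leaves implicit, is also fine.

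However, there is a genuine gap in the cut-vertex case: the claim that one can always arrange $V(P)\subseteq V(G_1)$ ``using that $P$ is connected'' is false. If the cut vertex $v$ is an \emph{internal} vertex of $P$, then $P-v$ has two components, and these may lie in different components of $G-v$; for instance $G-v$ may have exactly two components $A$ and $B$ with $p_1\in A$, $v=p_2$, and $p_3,p_4\in B$. In that situation any subgraph $G_1$ with $V(P)\subseteq V(G_1)$ and $V(G_1)\cap V(G_2)=\{v\}$ must contain all of $A\cup B\cup\{v\}=V(G)$, so the decomposition degenerates and your induction gives nothing. This is precisely the subcase the paper handles separately: when $P\nsubseteq G_1$ and $P\nsubseteq G_2$, it observes $v\in V(P)$, sets $P_i=P\cap G_i$ (a path in $G_i$ on which $f$ vanishes), applies Theorem~\ref{MAINRESULT} to $(G_i,P_i,f)$ for $i=1,2$, and glues via Observation~\ref{obs-aa}. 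You need to add this subcase; the verification is routine and along the same lines as your other legality checks, and with it your argument matches the paper's.
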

	\begin{proof}
		Suppose that $G$ has a cut-vertex $v$. Let $G_{1}$ and $G_{2}$ be two induced subgraphs of $G$ such that $V(G)= V(G_{1}) \cup V(G_{2})$ and $V(G_{1} \cap G_{2}) = \{v\}$ and $E(G)=E(G_1) \cup E(G_2)$. If $P \subseteq G_{1}$, then $(G_{1}, P, f)$ satisfies the conditions of Theorem~\ref{MAINRESULT}. Hence $G_{1}-P$ is weakly $f_{-P}$-degenerate. Also $(G_{2}, \{v\}, f)$ satisfies the conditions of Theorem~\ref{MAINRESULT}, and hence $G_{2} - v$ is weakly $f_{-v}$-degenerate. Note that $G-V(G_1)=G_2-v$ and the restriction of $f_{-V(G_1)}$ to $G_2-v$ equals $f_{-v}$. So by Observation~\ref{obs-aa}, $G - P$ is weakly $f_{-P}$-degenerate.

		Assume $P \nsubseteq G_{1}$ and $P \nsubseteq G_{2}$. Let $P_{1} = P \cap G_{1}$ and $P_{2} = P \cap G_{2}$. Then $v \in V(P)$, and $P_{i}$ is a path in $G_{i}$ for each $i \in \{1, 2\}$. Then $G_{1} - P_{1}$ and $G_{2} - P_{2}$ are weakly $f_{-P_{i}}$-degenerate. Hence $G - P$ is weakly $f_{-P}$-degenerate.
	\end{proof}

	It follows from Lemma~\ref{2CONNECTED} that the boundary $B(G)$ of $G$ is a cycle. A cycle $K$ in $G$ is \emph{separating} if both $\int(K)$ and $\ext(K)$ are not empty. 
	
	\begin{lemma}\label{SEPARATING}
		$|B(G)| \ge 8$ and every separating cycle in $G$ has length at least $8$. 
	\end{lemma}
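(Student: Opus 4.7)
I argue by contradiction. Suppose $G$ contains a cycle $K$ with $|K|\le 7$ that is either equal to $B(G)$ or is separating. In the separating case, set $G_1=\Int[K]$ and $G_2=\Ext[K]$: both are strictly smaller than $G$ (since $\Int(K)$ and $\Ext(K)$ are each nonempty), and $P\subseteq V(G_2)$ because $P\subseteq B(G)$. The boundary case $K=B(G)$ is handled similarly with $G_2$ trivial.

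The plan is to build a legal removal scheme for $(G,P,f)$ in two stages, each from the induction hypothesis, contradicting the minimality of $(G,P,f)$. In the first stage I choose a subpath $P_1$ of $K$ with $|P_1|\le 4$ and define $f_1'$ on $V(G_1)$ by $f_1'(v)=0$ on $V(P_1)$, $f_1'(v)=2$ on $\Int(K)$, and $f_1'(v)\in\{1,2\}$ on $V(K)\setminus V(P_1)$, the latter chosen so that $\{v:f_1'(v)=1\}$ is independent and each such vertex has at most one neighbor on $P_1$. Then $(G_1,P_1,f_1')$ satisfies the hypotheses of Theorem~\ref{MAINRESULT}, and induction yields a removal scheme $\Omega_1$ legal for $(G_1,P_1,f_1')$ that removes $\Int(K)\cup(V(K)\setminus V(P_1))$.

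In the second stage I apply $\Omega_1$ within $G$. Removals of vertices in $\Int(K)$ do not affect $V(G)\setminus V(G_1)$, while removals of vertices in $V(K)\setminus V(P_1)$ may decrement $f$-values of their neighbors in $\Ext(K)$; I verify legality using the fact that those neighbors initially have $f\ge 0$ and that at most one lies on $V(P)$, converting $\dd$ to $\ds$ along the way where needed. After $\Omega_1$, the remaining graph is $G'=\Ext(K)\cup P_1$, and I then check that $(G',P,f^*)$, with $f^*$ the updated function, satisfies the hypotheses of Theorem~\ref{MAINRESULT} again. A second application of the induction hypothesis furnishes $\Omega_2$, and concatenating $\Omega_1$ with $\Omega_2$ yields the required legal removal scheme for $(G,P,f)$.

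The main obstacle is choosing $P_1$ and $f_1'$ so that, after running $\Omega_1$, the updated instance $(G',P,f^*)$ still lies inside the scope of the induction hypothesis, in particular preserving the independence of $\{v:f^*(v)=1\}$ and the bound on its members' neighbors in $P$. Because $|K|\le 7$ and $|P|\le 4$ leave little slack, I expect the argument to split into subcases according to $|K|\in\{4,5,6,7\}$ and the incidence of $K$ with $B(G)$, using triangle-freeness together with the forbidden normal adjacency of $4$-cycles with $5^-$-cycles to exclude the configurations in which no suitable $P_1$ and $f_1'$ exist.
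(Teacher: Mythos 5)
Your decomposition runs in the wrong order, and this is not a defect that the proposed case analysis can repair. In Observation~\ref{obs-aa} the part that is removed \emph{first} is handled with the original function, while the part removed \emph{second} must survive the fully decremented function. You remove $\int(K)\cup(V(K)\setminus V(P_1))$ first; consequently every vertex of $\ext(K)$ with a neighbour in $V(K)\setminus V(P_1)$ enters the second stage with $f^{*}\le 1$ (and with $f^{*}\le 0$ if it has two such neighbours). Such a vertex lies on the boundary of the inner face of $G'=\ext(K)\cup P_1$ created by deleting the interior of $K$, not in general on the outer boundary $B(G')$, and condition (i) of Theorem~\ref{MAINRESULT} requires $f^{*}=2$ on every vertex off the outer boundary. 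Hence $(G',P,f^{*})$ falls outside the induction hypothesis whenever some exterior vertex is adjacent to $K\setminus P_1$ --- a situation you cannot exclude --- and no choice of $P_1$ and $f_1'$ avoids it, because the theorem has no mechanism for accommodating reduced values away from the outer face.

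The paper reverses the two stages. It first applies induction to $\ext[K]$ with the original pair $(P,f)$ --- legitimate precisely because the vertices of $K$ lie on an \emph{inner} face of $\ext[K]$ and therefore keep their original values --- and is then left with showing that $\int(K)$ is weakly $f_{-K}$-degenerate, i.e.\ survives the worst case in which all of $K$ has already been removed. That is where your stage-one construction belongs: apply induction to $\int[K]$ with $P'=v_1v_2\cdots v_{k-3}$ and values $(1,2,1)$ on the remaining three vertices of $K$; after removing $P'$ these values cascade to $0$, the vertices are stripped off by Proposition~\ref{remark1}, and what remains is exactly $\int(K)$ with $f_{-K}$. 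Note also that the case $K=B(G)$ cannot be ``handled similarly with $G_2$ trivial'': the instance you would feed to the induction then has the same underlying graph $G$, so minimality gives nothing. The paper instead deletes one boundary vertex outside $P$, pushes its loss onto its neighbours, and only then invokes induction on the strictly smaller graph.
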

	\begin{proof}		
		 Assume $B(G)=(v_1v_2\dots v_k)$ for some $k \le 7$. As $G$ is triangle-free and no $4$-cycle is normally adjacent to a $5^-$-cycle, $B(G)$ is an induced cycle.
		 
		 For convenience, assume $k=7$ (the $k \le 6$ can be treated similarly), and assume that $P=v_2v_3v_4v_5$. 
		  Let $G'=G-v_7$, and let $f'(v)=f(v) - 1$ for $v \in N_G(v_7)-\{v_1,v_6\}$, $f'(v_1)=f'(v_6)=1$ and $f'(v)=f(v)$ otherwise. It is easy to verify that $(G',P, f')$ satisfies the conditions of Theorem~\ref{MAINRESULT}, and hence $G' - P$ is weakly $f'_{-P}$-degenerate. As $f'_{-P}(v_1)=f'_{-P}(v_6)=0$, it follows from Proposition~\ref{remark1} that $G' - (P \cup \{v_1,v_6\})$ is weakly $f'_{-(P \cup \{v_1, v_6\})}$-degenerate.
		  Since $G-B(G)=G' - (P \cup \{v_1,v_6\})$ and $f_{-B(G)} = f'_{-(P \cup \{v_1, v_6\})}$, and $B(G)-P$ is certainly weakly $f_{-P}$-degenerate, it follows from Observation~\ref{obs-aa} that $G-P$ is weakly $f_{-P}$-degenerate. 
		  
		  Next we assume that $K$ is a separating $7^-$-cycle in $G$. Since $G \in \mathcal{G}$, $K$ is an induced cycle. 
		By the minimality of $G$, $\ext[K]-P$ is weakly $f_{-P}$-degenerate. 
		By Observation~\ref{obs-aa}, to show that $G-P$ is weakly $f_{-P}$-degenerate, it suffices to show that $G - \ext[K] = \int[K]-K$ is weakly $f_{-K}$-degenerate.

		Assume $K=(v_1,v_2, \dots, v_k)$, where $k \le 7$. 
		If $k \le 4$, then let $P'=(v_1,v_2, \dots, v_{k})$. Then $f'_{-K}=f'_{-P'}$. By the minimality of $G$, $\int[K]-K$ is weakly $f_{-K}$-degenerate. 
		Assume $k \ge 5$. Let $P'=v_1v_2\dots v_{k-3}$, $G'=\int[K]$
		and $f'' \in \mathbb{Z}^{G'}$ be defined as $f''(x)=0$ for $x \in P'$, $f''(v_{k-2})=f''(v_{k})=1$, $f''(v_{k-1})=2$ and $f''(x)=f(x)$ for $x \notin K$. By the minimality of $G$, $G'- P'$ is weakly $f''_{-P'}$-degenerate. The same argument as above shows that 
		$\int[K] - K$ is weakly $f_{-K}$-degenerate.
	\end{proof}

	\begin{lemma}\label{NOADJ}
		There are no $4$-cycles adjacent to $4$- or $5$-cycles. 
	\end{lemma}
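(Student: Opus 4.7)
The plan is to assume for contradiction that some $4$-cycle $C_1$ is adjacent to a $4$- or $5$-cycle $C_2$ in $G$, and to derive a contradiction from Lemma~\ref{SEPARATING} (which guarantees $|B(G)|\ge 8$). Since $G\in\mathcal{G}$ forbids normal adjacency of a $4$-cycle to a $5^-$-cycle, $C_1$ and $C_2$ must share at least two edges.

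First I would classify the possible sharing patterns. As $G$ is triangle-free, two shared edges of the $4$-cycle $C_1$ cannot be non-adjacent (this would force the remaining edges of $C_2$ to include a chord of $C_1$ and hence a triangle), so $C_1\cap C_2$ is a path. A shared path of length $\ge 4$ is impossible since $|C_1|=4$; a shared length-$3$ path in the $(4,5)$-case joins its two endpoints both by the remaining edge of $C_1$ and by the remaining length-$2$ path of $C_2$, producing a triangle; in the $(4,4)$-case a shared length-$3$ path forces $C_1=C_2$. Hence $C_1\cap C_2$ is a path of length exactly $2$, say with endpoints $v_1$ and $v_3$.

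Then $T:=C_1\cup C_2$ is a theta graph consisting of three internally disjoint $v_1$--$v_3$ paths; the three cycles obtained by pairing these paths are $C_1$, $C_2$, and a third cycle $C_3$, each of length $4$ or $5$. Triangle-freeness forbids chords in any $4$- or $5$-cycle (a chord would create a triangle or a $4$-cycle normally adjacent to a $5^-$-cycle), so if any of $C_1, C_2, C_3$ were non-facial it would be a separating cycle of length at most $5$---contradicting Lemma~\ref{SEPARATING}. Therefore $C_1, C_2, C_3$ are all facial in $G$, and they bound the three regions of $T$ in its induced planar embedding inside $G$. No vertex or edge of $G$ can lie in any of those regions, so $G=T$. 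But then $B(G)$ equals one of $C_1, C_2, C_3$, giving $|B(G)|\le 5 < 8$, again contradicting Lemma~\ref{SEPARATING}.

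The main obstacle I expect is the combinatorial enumeration in the first step---checking, for each $(|C_1|,|C_2|)\in\{(4,4),(4,5)\}$ and each admissible shared-path length, that only a shared path of length $2$ survives triangle-freeness and the $\mathcal{G}$-constraints. Once this is settled, the theta-graph/face-counting argument is short and proceeds uniformly for both cases.
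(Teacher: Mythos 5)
Your proposal is correct and follows essentially the same route as the paper: both reduce, via triangle-freeness and the no-normal-adjacency hypothesis, to the case where $C_1\cap C_2$ is a path of length two, and then derive a contradiction with Lemma~\ref{SEPARATING} from the resulting theta graph. The only difference is cosmetic and lies in the endgame --- the paper notes that the shared middle vertex has degree $2$ and that the third cycle of the theta graph is then a separating $5^-$-cycle, whereas you argue that all three cycles would have to be facial, forcing $G$ to coincide with the theta graph and hence $|B(G)|\le 5$; both conclusions contradict Lemma~\ref{SEPARATING}.
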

	\begin{proof}
		Suppose to the contrary that a $4$-cycle $C_{1}$ is adjacent to a $5^{-}$-cycle $C_{2}$. 
		By assumption, $C_1$ and $C_2$ are not normally adjacent. So they intersect at three vertices. As $C_2$ has no chord, we may assume that $C_1=[a_1a_2a_3a_4]$ and $C_2=[a_1a_2a_3b_4]$ or $C_2=[a_1a_2a_3b_4b_5]$. 
		By Lemma~\ref{SEPARATING}, each of $C_{1}$ and $C_{2}$ bounds a face. Thus $a_{2}$ is a $2$-vertex which must be on the outer face. But then $[a_{1}a_{4}a_{3}b_4]$ or $[a_1a_4a_3b_4b_5]$ is a separating cycle of length at most $5$, contradicting Lemma~\ref{SEPARATING}. 
	\end{proof}

	A \emph{$k$-chord} of $B(G)$ is a path $Q$ of length $k$ such that only its two ends are on $B(G)$. A 1-chord is also called a chord of $B(G)$.
	Let $G_1,G_2$ be the two subgraphs with $V(G_1) \cap V(G_2) = V(Q)$ and $V(G_1) \cup V(G_2) = V(G)$ and $E(G)=E(G_1) \cup E(G_2)$. We say $G_1$ and $G_2$ are the subgraphs of $G$ \emph{separated} by $Q$. We index the subgraphs so that $|E(P \cap G_1)| \geq |E(P \cap G_2)|$. Hence $|E(P \cap G_2)| \le 1$. 
	
	\begin{observation}
		\label{ob2}
		 Let $P_2 = Q \cup (P \cap G_2)$. It is obvious that $(G_1,P,f)$ satisfies the conditions of Theorem~\ref{MAINRESULT}, and hence $G_1-P$ is weakly $f_{-P}$-degenerate. If $P_{2}$ is an induced path and $(G_2,P_2,f)$ also satisfies the conditions of Theorem~\ref{MAINRESULT}, then $G_2-P_2$ is weakly $f_{-P_2}$-degenerate, and it follows from Observation~\ref{obs-aa} that $G-P$ is weakly $f_{-P}$-degenerate, a contradiction. Thus we may assume that 
		$(G_2,P_2,f)$ does not satisfy the conditions of Theorem~\ref{MAINRESULT}.
	\end{observation}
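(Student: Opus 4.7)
The observation splits into three pieces, to be verified in order: checking that $(G_1,P,f)$ meets the hypotheses of Theorem~\ref{MAINRESULT} so that minimality of $(G,P,f)$ gives weak $f_{-P}$-degeneracy of $G_1-P$; checking that the conditional assumption in the second sentence analogously yields weak $f_{-P_2}$-degeneracy of $G_2-P_2$; and gluing the two resulting removal schemes via Observation~\ref{obs-aa} to obtain weak $f_{-P}$-degeneracy of $G-P$, contradicting that $(G,P,f)$ is a counterexample.

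For the first piece, the plan is to verify each clause of the hypothesis for $(G_1,P,f)$. The subgraph $G_1$ is a plane subgraph of $G$, so $G_1\in\mathcal{G}$. Its boundary walk is the $G_1$-side of $B(G)$ concatenated with $Q$, so $P\subseteq B(G_1)$ (by our indexing choice). The weight conditions transfer: $f(p_i)=0$ is inherited; every $v\in V(G_1)\setminus B(G_1)$ satisfies $v\notin B(G)$ and hence $f(v)=2$; and every $v\in B(G_1)\setminus V(P)$ is either on $B(G)$ (giving $1\le f(v)\le 2$) or is an internal vertex of $Q$ and thus has $f(v)=2$. Condition (ii) is hereditary. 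Minimality of $(G,P,f)$ then applies because $|V(G_1)|+|E(G_1)|<|V(G)|+|E(G)|$: the chord $Q$ genuinely separates, so $G_2$ contains at least one vertex or edge outside $G_1$. This yields weak $f_{-P}$-degeneracy of $G_1-P$.

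The second piece is analogous: under the conditional hypothesis, the same appeal to minimality produces weak $f_{-P_2}$-degeneracy of $G_2-P_2$. For the gluing, set $U:=V(G_1)\setminus V(P)$. Since $V(P)\subseteq V(G_1)$ we have $V(P\cap G_2)\subseteq V(Q)$ and $V(P_2)=V(Q)\cup V(P\cap G_2)=V(Q)$, whence $(G-P)-U=G_2-V(Q)=G_2-V(P_2)$. A short check shows that for every $x\in V(G_2)\setminus V(Q)$ all $G$-neighbors of $x$ lie inside $G_2$, so $(f_{-P})_{-U}(x)=f_{-P_2}(x)$, and the second piece yields weak $(f_{-P})_{-U}$-degeneracy of $(G-P)-U$. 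By Observation~\ref{obs-aa}, $G-P$ is weakly $f_{-P}$-degenerate iff $(G-P)[U]=G_1-V(P)$ is weakly $(f_{-P})|_U$-degenerate, and the first piece supplies the latter, provided that the ``$G$-restricted'' and ``$G_1$-restricted'' values of $f_{-P}$ agree on $U$.

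The main bookkeeping obstacle is precisely this last weight match, at vertices $x\in V(Q)\setminus V(P)$ where edges from both sides of $Q$ meet; the fact dissolving it is $V(P)\subseteq V(G_1)$, which forces every edge from $x$ to $V(P)$ to lie in $E(G_1)$, so no contribution is lost or double-counted. Everything else is a routine transfer of definitions, and the contradiction from the combined weak $f_{-P}$-degeneracy forces the final conclusion: $(G_2,P_2,f)$ must fail some hypothesis of Theorem~\ref{MAINRESULT}.
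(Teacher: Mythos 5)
Your overall architecture --- apply minimality to both sides of $Q$ and glue with Observation~\ref{obs-aa} --- is exactly what the paper intends (the Observation is asserted without a separate proof), but there is a genuine gap: you assert, and then crucially use, that $V(P)\subseteq V(G_1)$, claiming it follows ``by our indexing choice.'' It does not. The indexing only guarantees $|E(P\cap G_1)|\ge |E(P\cap G_2)|$, hence $|E(P\cap G_2)|\le 1$; the case $|E(P\cap G_2)|=1$ is perfectly possible, and in it one vertex of $P$ lies in $V(G_2)\setminus V(Q)$ (an edge of $P$ with both ends on $Q$ is ruled out by triangle-freeness), so $P\nsubseteq G_1$. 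This is not a fringe case: it is precisely the situation in which $P_2=Q\cup(P\cap G_2)$ properly extends $Q$ --- the whole reason $P_2$ is defined this way --- and it is the case in which the Observation is actually invoked later (``We may assume that $|E(P\cap G_2)|=1$'' in Lemma~\ref{NOCHORD}, and ``Assume $P_2$ has length $3$'' in Lemma~\ref{2-chord}).

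In that case your first piece already fails to typecheck ($P$ is not a path in $G_1$, so the induction must be applied to $(G_1,P\cap G_1,f)$), your identity $V(P_2)=V(Q)$ is false, and the weight match in the gluing needs a real argument. Concretely, with $U=V(G_1)\setminus V(P)$, minimality gives that $G_1-(P\cap G_1)$ is weakly $g$-degenerate for $g=f_{-(P\cap G_1)}$ computed inside $G_1$, whereas Observation~\ref{obs-aa} requires weak $(f_{-P})|_U$-degeneracy; these differ (by an extra $-1$) at any vertex of $V(G_1)$ that is adjacent to the vertex of $P$ lying in $V(G_2)\setminus V(Q)$, and weak degeneracy is not inherited when the function decreases. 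One must therefore rule out such adjacencies --- only the ends of $Q$ are candidates, and triangle-freeness (plus, where already available, chordlessness of $B(G)$) does the job --- or otherwise verify the equality directly. As written, your proposal establishes the Observation only under the extra hypothesis $P\subseteq G_1$; the remaining case, which is the one that carries the load in the paper, is unaddressed.
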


	\begin{lemma}\label{NOCHORD}
		$B(G)$ has no chords. 
	\end{lemma}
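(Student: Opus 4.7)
Suppose for contradiction that $B(G)$ has a chord $Q = uv$, and let $G_{1}, G_{2}$ be the subgraphs of $G$ separated by $Q$, indexed so that $|E(P \cap G_{1})| \ge |E(P \cap G_{2})|$. The plan is to exhibit a legal removal scheme for $(G-P, f_{-P})$ that removes every vertex of $G-P$, contradicting the minimality of $(G,P,f)$. The argument splits into two cases according to whether an endpoint of $Q$ lies in $V(P)$.

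\textbf{First case: at least one endpoint of $Q$ lies in $V(P)$.} Here I would simply delete the chord: let $G' = G - Q$. Since $Q$ is an interior edge of the plane embedding, the outer face of $G'$ is unchanged, so $B(G') = B(G)$ and $G' \in \mathcal{G}$; hypotheses~(i)--(ii) of Theorem~\ref{MAINRESULT} for $(G',P,f)$ are inherited from $(G,P,f)$, and $|E(G')| < |E(G)|$. By the minimality of $(G,P,f)$, $G' - P$ is weakly $f_{-P}$-degenerate. But $Q$ has an endpoint in $V(P)$, so $Q$ is absent from both $G-P$ and $G'-P$; hence $G-P = G'-P$ as graphs and the conclusion transfers to $(G,P,f)$---a contradiction.

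\textbf{Second case: neither endpoint of $Q$ lies in $V(P)$.} Then $P \subseteq G_1$ and $P \cap G_2 = \varnothing$, so $P_2 := Q$ is the two-vertex path on $\{u,v\}$. Define $f' \in \mathbb{Z}^{G_2}$ by $f'(u) = f'(v) = 0$ and $f'(x) = f(x)$ otherwise. I would verify that $(G_2, P_2, f')$ satisfies the hypotheses of Theorem~\ref{MAINRESULT}; the only nontrivial check is that no vertex $x$ with $f'(x)=1$ is adjacent to both $u$ and $v$, which holds because otherwise $uxvu$ would be a triangle in $G$, contradicting $G \in \mathcal{G}$. By Observation~\ref{ob2}, $(G_1, P, f)$ also satisfies the hypotheses. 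Both instances are strictly smaller than $(G,P,f)$, so by minimality there exist legal removal schemes $\Omega_1$ for $(G_1 - P, f_{-P})$ and $\Omega_2$ for $(G_2 - P_2, f'_{-P_2})$ that remove all vertices of their respective graphs.

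Finally I would concatenate $\Omega_1$ followed by $\Omega_2$ and verify legality in $(G-P, f_{-P})$. The key structural facts are that every neighbor of $x \in V(G_2) \setminus \{u,v\}$ in $V(G_1)$ lies in $\{u,v\}$; that no such $x$ is adjacent to both $u$ and $v$ (triangle-freeness); and that every save-target used by $\Omega_1$ lies in $V(G_1 - P)$, hence outside $V(G_2) \setminus \{u,v\}$. A direct computation then shows that after $\Omega_1$ has been executed on $(G-P, f_{-P})$, the residual $f$-value at each $x \in V(G_2) \setminus \{u,v\}$ equals $f(x) - |N_G(x) \cap \{u,v\}| = f'_{-P_2}(x)$, so $\Omega_2$ is legal on the residual state and removes the remaining vertices. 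I expect the main technical obstacle to be this bookkeeping: confirming that $\Omega_1$ applied inside the larger graph $G-P$ remains legal at every intermediate step, and that the residual $f$-values \emph{exactly} match the starting values assumed by $\Omega_2$, so that $\Omega_2$'s strict-inequality requirements for its DeleteSave moves are not disturbed.
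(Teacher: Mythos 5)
Your second case (the chord disjoint from $P$) is correct and is essentially the paper's own argument for the subcase $P \subseteq G_1$: split along the chord, apply the induction hypothesis to $(G_1,P,f)$ and to $(G_2,uv,f')$, note that triangle-freeness prevents any vertex of $G_2$ from seeing both $u$ and $v$, and glue the two removal schemes (this gluing is exactly Observation~\ref{obs-aa}, so the bookkeeping you describe does go through).

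Your first case, however, has a genuine gap. Deleting the chord $Q=uv$ with $u \in V(P)$ and $v \notin V(P)$ does produce a smaller admissible instance $(G',P,f)$, and indeed $G-P = G'-P$ as graphs. But the induced function is not preserved: the conclusion you need for $(G,P,f)$ is that $G-P$ is weakly $f_{-P}$-degenerate with $f_{-P}(x) = f(x) - |N_G(x)\cap V(P)|$, whereas minimality applied to $(G',P,f)$ only yields that $G'-P$ is weakly $g$-degenerate for $g(x) = f(x) - |N_{G'}(x)\cap V(P)|$. At the vertex $v$ one has $g(v) = f_{-P}(v)+1$, because the deleted chord was a $P$-neighbour of $v$ in $G$ but not in $G'$. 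Weak degeneracy with respect to the larger function $g$ does not imply weak degeneracy with respect to the smaller $f_{-P}$ (equivalently: $G - E[P]$ still contains the edge $uv$ while $G'-E[P]$ does not, and adding an edge can destroy weak $f$-degeneracy), so the conclusion does not ``transfer''. This case genuinely needs more work, and it is where the paper's proof differs from yours: the paper keeps the chord, takes $P_2 = uv \cup (P\cap G_2)$ (a path of length two, since exactly one edge of $P$ lies in $G_2$), and observes that $(G_2,P_2,f)$ can only fail the hypotheses of Theorem~\ref{MAINRESULT} if some vertex $y$ with $f(y)=1$ is adjacent to two vertices of $P_2$, which by triangle-freeness forces $P_2$ to lie on a $4$-cycle; in that remaining situation Lemma~\ref{NOADJ} guarantees $uv$ lies on no $5^-$-cycle in $G_1$, which allows the roles of the two sides to be swapped, inducting with the short path $P\cap G_2$ in $G_2$ and with $P_1 = uv\cup(P\cap G_1)$ in $G_1$ before gluing via Observation~\ref{obs-aa}.
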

	\begin{proof}
		Assume to the contrary that $B(G)$ has a chord $uw$. Let $G_1, G_2$ be the two subgraphs of $G$ separated by $uw$.
		
		Assume $P \subseteq G_{1}$. Since $G$ is triangle-free, each vertex in $G_2$ is adjacent to at most one vertex in $\{u, w\}$. Thus $(G_{2}, uw, f)$ satisfies the conditions of Theorem~\ref{MAINRESULT}, in contrary to Observation~\ref{ob2}. 
		
		Assume $P \nsubseteq G_{1}$ and $P \nsubseteq G_{2}$. Without loss of generality, assume that $w \in V(P)$. Then $|E[P] \cap E(B(G_i))| < |E[P]| \le 3$. Since $G$ is triangle-free, $u \notin V(P)$ and $P$ is an induced path.
		
		We may assume that $|E(P \cap G_2)| = 1$. If $P_{2}$ is not contained in a 4-cycle in $G_2$, then $(G_2, P_2, f)$ satisfies the conditions of Theorem~\ref{MAINRESULT}, a contradiction.

		Assume $P_{2}$ is contained in a 4-cycle in $G_2$. Since no 4-cycle in $G$ is adjacent to a $5^-$-cycle, $uw$ is not contained in a $5^-$-cycle in $G_1$. Let $P_1 = uw \cup (P \cap G_1)$. It is easy to verify that 
		$(G_{2}, P, f)$ and $(G_1, P_1, f)$ satisfy the conditions of Theorem~\ref{MAINRESULT}. By the minimality of $G$, $G_{2} - P$ is weakly $f_{-P}$-degenerate, and $G_{1} - P_{1}$ is weakly $f_{-P_{1}}$-degenerate. It follows from Observation~\ref{obs-aa} that $G - P$ is weakly $f_{-P}$-degenerate, a contradiction. 
	\end{proof}

	Since $B(G)$ is an induced cycle of length at least $8$ and $(G, P, f)$ is a counterexample with minimum $\sum_{v \in V(G)\setminus V(P)} f(v)$, we may assume that $P = p_{1}p_{2}p_{3}p_{4}$ is an induced path of length three.
	Assume $B(G)=p_{1}p_{2}p_{3}p_{4}x_{1}x_{2}\dots x_{m}$, where $m \ge 4$. We say a $k$-chord $Q$ of $B(G)$ \emph{splits off} a face ${F}$ from $G$ if one of the two subgraphs separated by $Q$ is the boundary cycle of $F$.

	\begin{lemma}\label{2-chord}
		Let $uvw$ be a $2$-chord of $B(G)$. Then $\{u, w\} \nsubseteq V(P)$, and $uvw$ splits off a $5^{-}$-face ${F}$ such that $|V({F}) \cap V(P)| \leq 2$. Moreover, if $|V({F}) \cap V(P)| \leq 1$, then ${F}$ is a $4$-face. Consequently, every internal vertex is adjacent to at most two vertices in $B(G)$ and adjacent to at most one vertex in $V(P)$. 
	\end{lemma}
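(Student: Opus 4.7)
The plan is to establish the four conclusions in order, each by reducing to a smaller counterexample via the minimality of $(G,P,f)$. For the non-containment $\{u,w\} \not\subseteq V(P)$, I argue by contradiction. Triangle-freeness together with $|V(P)| \leq 4$ forces $\{u,w\} = \{p_i, p_{i+2}\}$ for some $i$, and Lemma~\ref{SEPARATING} makes the length-$4$ cycle $C = p_i v p_{i+2} p_{i+1}$ bound a face, so $p_{i+1}$ has degree $2$ in $G$. I would then pass to $(G', P', f')$ with $G' = G - p_{i+1}$, $P'$ obtained from $P$ by substituting $v$ for $p_{i+1}$, and $f'(v) = 0$ while $f' = f$ elsewhere. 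The critical verification is condition~(ii) of Theorem~\ref{MAINRESULT}: if some $x \in I$ had two neighbors in $P'$, then after ruling out triangles through $v$ and the degree-$2$ vertex $p_{i+1}$, $x$ would have to be adjacent to both $v$ and some $p_j \in V(P) \setminus \{p_i, p_{i+1}, p_{i+2}\}$, producing a second $4$-cycle through $xv$ that shares an edge with $C$ and violates Lemma~\ref{NOADJ}. Minimality then yields a scheme $\Omega$ for $G' - P'$, and prepending the move $\dd(v)$ gives a legal scheme for $(G-P, f_{-P})$ (since $f_{-P}(v) = 0$ and, by the same NOADJ-argument, every remaining neighbor of $v$ retains enough $f_{-P}$-budget), contradicting the choice of $(G,P,f)$.

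For the face claims, let $G_1, G_2$ be the subgraphs separated by $Q = uvw$ with boundary cycles $C_i = Q \cup A_i$; by Lemma~\ref{SEPARATING}, $|C_i| \leq 7$ forces $G_i = C_i$ (a face). I plan a case analysis on the position of $\{u,w\}$ relative to $V(P)$, noting that at most one of $u, w$ lies in $V(P)$ by the just-established first claim. In each configuration where the desired conclusion (the $5^{-}$-face existence, the bound $|V(F) \cap V(P)| \leq 2$, or the $4$-face refinement) fails, I would apply Theorem~\ref{MAINRESULT} inductively to both $(G_1, P_1, f_1)$ and $(G_2, P_2, f_2)$ with $P_i = Q \cup (P \cap G_i)$ and $f_i(v) = 0$, then splice the two schemes via Observation~\ref{obs-aa} and Observation~\ref{ob2} into a weak-degeneracy scheme for $G - P$, contradicting minimality. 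I expect the main obstacle to be verifying, in each sub-case, that $P_i$ is induced and that $(G_i, P_i, f_i)$ satisfies condition~(ii); the latter relies on Lemma~\ref{NOADJ} to rule out $I$-vertices that would collect two neighbors in $P_i$ once $v$ is absorbed.

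The ``consequently'' clause then follows quickly. Two distinct $V(P)$-neighbors of an internal vertex $x$ would give a $2$-chord contradicting the first claim, so $x$ has at most one $V(P)$-neighbor. If $x$ had three distinct $B(G)$-neighbors $y_1, y_2, y_3$ listed in cyclic order on $B(G)$, the consecutive $2$-chords $y_i x y_{i+1}$ (indices mod~$3$) each split off a $5^{-}$-face by the face claim; since consecutive such faces share the edge $x y_{i+1}$, Lemma~\ref{NOADJ} forces each to be a $5$-face, and the contrapositive of the $4$-face refinement gives $|V(F) \cap V(P)| \geq 2$ for each. The three face-arcs partition $B(G)$ (overlapping only at the $y_j$'s), and applying the first claim to each $2$-chord $y_i x y_j$ shows that at most one of $y_1, y_2, y_3$ lies in $V(P)$, so the total $V(P)$-multiplicity across the three faces is at most $|V(P)| + 1 = 5 < 6$, a contradiction.
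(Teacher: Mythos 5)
Your overall strategy --- cut along the $2$-chord, apply the minimality of $(G,P,f)$ to both sides, and locate the obstruction vertex that prevents the small side from satisfying the hypotheses of Theorem~\ref{MAINRESULT} --- is the same as the paper's, and your derivation of the ``consequently'' clause is correct (and more explicit than the paper, which leaves it to the reader). But there are two genuine gaps. First, in the proof that $\{u,w\}\nsubseteq V(P)$, triangle-freeness does \emph{not} force $\{u,w\}=\{p_i,p_{i+2}\}$: the case $\{u,w\}=\{p_1,p_4\}$ survives, where $p_1vp_4p_3p_2$ is a $5$-cycle bounding a face and both $p_2,p_3$ have degree $2$. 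Your substitution trick could be adapted (replace $p_2p_3$ by $v$, giving $P'=p_1vp_4$), but as written the case is simply absent. The paper disposes of it uniformly: here $P_2=p_1vp_4$ has length $2$, the obstruction vertex $y$ must be adjacent to both $p_1$ and $p_4$ (triangle-freeness kills the pairs containing $v$, and Lemma~\ref{NOCHORD} forces $y$ to be consecutive with them on $B(G)$), so $G_2$ is a facial $4$-cycle adjacent to the facial $5$-cycle $G_1$, contradicting Lemma~\ref{NOADJ}.

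Second, and more seriously, your plan for the face claims only ever decomposes along $Q=uvw$ itself, and this fails in the key case. When $P_2=uvp_3p_4$ has length $3$ and the obstruction vertex $y$ (with $f(y)=1$ and two neighbours in $P_2$) is adjacent to $p_4$ and to the \emph{interior} vertex $v$ of the chord, the side $G_2$ is not a face at all: it contains the facial $4$-cycle $vp_3p_4y$ together with whatever sits between the edges $yv$ and $uv$. No splicing of schemes along $uvw$ resolves this, because $(G_2,P_2,f)$ genuinely fails condition~(ii), and the desired conclusion about $uvw$ is not recoverable from this decomposition. The paper escapes by changing the chord: it cuts instead along the $2$-chord $yvu$, checks that both resulting pieces satisfy the hypotheses of Theorem~\ref{MAINRESULT}, and concludes via Observation~\ref{obs-aa} that the whole configuration cannot occur in a minimal counterexample. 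That switch of chord is the missing idea; your remark that the ``main obstacle'' is verifying condition~(ii) in each sub-case understates the problem, since in this sub-case condition~(ii) really does fail and a different decomposition is required. (A smaller point: your $P_1=Q\cup(P\cap G_1)$ can have up to seven vertices when $P\subseteq G_1$, so Theorem~\ref{MAINRESULT} cannot be invoked for it; the paper uses $(G_1,P,f)$ with the original $P$ on that side.)
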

	
	\begin{proof}
		Assume $uvw$ is a 2-chord and $G_{1}$ and $G_{2}$ are subgraphs separated by $uvw$. Assume $|E(P \cap G_{1})| > |E(P \cap G_{2})|$. 
		Let $P_2=uvw \cup (P \cap G_2)$. As $|E(P \cap G_{1})| > |E(P \cap G_{2})|$, we know that $|E(P \cap G_{2})| \le 1$ and hence $P_2$ has length $2$ or $3$. If $P_{2}$ is not induced, then since $G$ is triangle-free and $B(G)$ has no chord, $G[P_{2}]$ is a $4$-cycle that bounds a $4$-face by Lemma~\ref{SEPARATING}. So we may assume $P_{2}$ is an induced path. 
		
	By Observation~\ref{ob2}, $(G_2,P_2,f)$ does not satisfy the conditions of Theorem~\ref{MAINRESULT}. This means that $G_2$ has a vertex $y$ with $f(y)=1$ and $y$ is adjacent to two vertices of $P_2$. 
		
		If $P_2$ has length $2$, then $G_2$ is a 4-cycle, and hence $uvw$ splits off a 4-face. Moreover, $\{u,w\} \not\subseteq V(P)$, for otherwise, $G_1$ is a $5^-$-cycle, in contrary to Lemma~\ref{NOADJ}. 
		
		Assume $P_2$ has length $3$. We may assume $w=p_3$ and $P_2=uvp_3p_4$. As $B(G)$ has no chord, we know that either $y$ is adjacent to $p_4$ and $v$, or $y$ is adjacent to $p_4$ and $u$. 
		
		If $y$ is adjacent to $p_4$ and $v$, then $u \notin P$ (for otherwise $G_1$ is a 4-cycle and $G$ contains two adjacent 4-cycles). 
		Then $yvu$ is a 2-chord which separates $G$ into $G'_1$ and $G'_2$ with $P \subseteq V(G'_1)$. Let $P'_2=yvu$. Then $(G'_1,P, f)$ and $(G'_2, P'_2, f)$ satisfy the conditions of Theorem~\ref{MAINRESULT}, and hence $G'_1-P$ is weakly $f_{-P}$-degenerate, and $G'_2-P'_2$ is weakly $f_{-P'_2}$-degenerate (note that in this case, $G'_2$ is not a 4-cycle as $G$ contains no two adjacent 4-cycles). By Observation~\ref{obs-aa}, $G-P$ is weakly $f_{-P}$-degenerate. 
		
		Assume $y$ is adjacent to $p_4$ and $u$. Then $G_2$ is a facial 5-cycle  by Lemma~\ref{SEPARATING}. If $u \in P$, then $u=p_1$ and $G_1$ is a facial 4-cycle by Lemma~\ref{SEPARATING}, implying that $\deg_G(v)=2$, a contradiction. Thus $u \not\in P$ and $uvw$ splits off a $5$-face. 
	\end{proof}

	\begin{lemma}\label{2-chord-E}
		If $uvw$ is a 2-chord and $f(u) = 1$, then $w \in \{p_{2}, p_{3}\}$.
	\end{lemma}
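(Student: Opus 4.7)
The plan is to argue by contradiction: suppose $f(u) = 1$ and $w \notin \{p_{2}, p_{3}\}$, and to build a legal removal scheme $\Omega$ for $(G, P, f)$ whose residual $(G_{\Omega}, P, f_{\Omega})$ is a strictly smaller instance satisfying the hypotheses of Theorem~\ref{MAINRESULT}, contradicting the minimality of $(G, P, f)$. Since $f(u) = 1$, condition~(i) forces $u \notin V(P)$, and Lemma~\ref{2-chord} guarantees that $uvw$ splits off a $5^{-}$-face $F$ with $|V(F) \cap V(P)| \leq 2$ and $\{u, w\} \not\subseteq V(P)$.

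I would split on whether $w \in V(P)$. In the case $w \notin V(P)$, both $u, w \in B(G) \setminus V(P)$, so every intermediate vertex on the short side of $uvw$ on $B(G)$ is a boundary-neighbor of $u$ or $w$; checking case by case against the layout $B(G) = p_{1} p_{2} p_{3} p_{4} x_{1} \ldots x_{m}$ rules out every sub-case of $|V(F) \cap V(P)| \geq 1$ and also the $5$-face sub-case, leaving only $F = uvwz$ a $4$-face with $u, v, w, z \notin V(P)$. In the case $w \in \{p_{1}, p_{4}\}$, say $w = p_{1}$ by symmetry, an analogous check forces $F = x_{m-1}\, v\, p_{1}\, x_{m}$ to be a $4$-face, i.e.\ $u = x_{m-1}$ and $z = x_{m}$. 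In either scenario, $z$ is boundary-adjacent to $u \in I$, so the independence of $I$ (condition~(ii)) forces $f(z) = 2$.

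In the remaining $4$-face $F = uvwz$ with $f(z) = 2$, the removal scheme opens with either $\del(\langle z \rangle, \langle u \rangle)$ or $\del(\langle z, w \rangle, \langle u \rangle)$, chosen according to the value of $f_{-P}(w)$, so that both $u$ and $z$ are legally removed and $v$ becomes a boundary vertex of $G - \{u, z\}$ with $f_{\Omega}(v) = 1$. The main obstacle is the joint preservation of legality and of condition~(ii) for the residual instance: each $\ds(a, b)$ requires a strict $f$-inequality and each $\dd(a)$ requires $f_{-a} \geq 0$ on every neighbor, while in the residual the new set $I_{\Omega} = \{x : f_{\Omega}(x) = 1\}$ must stay independent and each of its elements must still have at most one $P$-neighbor. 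The choice between $\dd(z)$ and $\ds(z, w)$ as the opening move is dictated by $f_{-P}(w)$, and verifying that $I_{\Omega}$ stays independent reduces to ruling out adjacencies between the newly-created $f$-value-$1$ vertices near $F$, which follows from Lemma~\ref{NOADJ} together with the no-$4$-cycle-adjacent-to-$5^{-}$-cycle hypothesis of $\mathcal{G}$.
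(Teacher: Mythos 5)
There is a genuine gap, and it comes from missing the one idea the paper's argument actually turns on. In a minimal counterexample, Observation~\ref{obs-aa} forces $f(x)\le\deg_G(x)-1$ for every $x\notin V(P)$ (otherwise delete $x$ and apply the minimality of $|V(G)|+|E(G)|$). The vertex you call $z$ --- the boundary neighbour of $u$ inside the split-off face, which indeed has degree $2$ --- therefore satisfies $f(z)\le 1$, while condition (i) gives $f(z)\ge 1$ whenever $z\notin V(P)$; so $f(z)=1$, and since $zu\in E(G)$ and $f(u)=1$, the independence of $I$ forces $z\in V(P)$, hence $z\in\{p_1,p_4\}$ and $w\in\{p_2,p_3\}$ by walking around the face. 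Your deduction runs the implication the other way: you use independence of $I$ to conclude $f(z)=2$, accept the configuration as genuinely occurring, and then commit to a removal-scheme surgery that is both unnecessary and, as written, does not close.

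Concretely, the surgery fails on condition (ii) for the residual instance. After removing $z$ and $u$, the internal vertex $v$ becomes a boundary vertex with $f_{\Omega}(v)=1$, and $w$, being adjacent to $z$ on the $4$-face, drops to $f_{\Omega}(w)=f(w)-1$. In the generic sub-case of your first scenario ($f(w)=2$ and $w$ with no neighbour in $P$) you cannot prevent this drop: $\ds(z,w)$ requires the strict inequality $f_{-P}(z)>f_{-P}(w)$, i.e.\ $2>2$, so it is illegal, and $\dd(z)$ leaves $f_{\Omega}(w)=1$. Then $v$ and $w$ are adjacent vertices of $I_{\Omega}$ --- their adjacency is built into the $2$-chord $uvw$ and cannot be ``ruled out'' by Lemma~\ref{NOADJ} as you claim --- so $(G_{\Omega},P,f_{\Omega})$ violates condition (ii) and the induction hypothesis does not apply. (There are further unexamined failures of the same kind: every other neighbour of $u$ also drops to value $1$ and may be adjacent to a pre-existing vertex of $I$.) The case analysis in your second paragraph, reducing everything to a $4$-face $uvwz$ with $\deg(z)=2$ and $z\notin V(P)$, is essentially sound; the correct conclusion at that point is that such a $z$ contradicts minimality via the degree reduction, not that vertices should be removed.
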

	\begin{proof}
		Assume $uvw$ is a $2$-chord with $f(u) = 1$. By Lemma~\ref{2-chord}, the $2$-chord $uvw$ splits off a $5^{-}$-face $F = [uvw\dots xu]$ with $|V({F}) \cap V(P)| \leq 2$. Thus $\deg(x)=2$. Since $f(u)=1$, and $f(x) \leq \deg(x) - 1 = 1$, we conclude that $x \in P$ (as ${I}$ is an independent set). Since $F$ is a $5^{-}$-face, we have $x \in \{p_1,p_4\}$ and $w \in \{p_2,p_3\}$. 
	\end{proof}

	\begin{lemma}\label{3-chord}
		If $Q= uvwz$ is a $3$-chord with $\{u, z\} \cap \{p_{2}, p_{3}\} = \emptyset$, then $Q$ splits off a $5^{-}$-face ${F}'$ such that $V({F}') \cap V(P) \subseteq \{u, z\}$. 
	\end{lemma}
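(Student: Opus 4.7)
The plan is to apply the splitting framework of Observation~\ref{ob2} to the two subgraphs $G_1$ and $G_2$ of $G$ separated by $Q$, indexed so that $|E(P \cap G_1)| \ge |E(P \cap G_2)|$. Since $\{u, z\} \cap \{p_2, p_3\} = \emptyset$ and $v, w$ are interior vertices of $G$, every edge of $P$ lies in $G_1$, so $P_2 = Q \cup (P \cap G_2) = Q$ is a $4$-vertex path on $B(G_2)$. I first dispose of the easy subcase where $u$ and $z$ are consecutive on $B(G)$: then $uz \in E(G)$ and $Q \cup \{uz\}$ is an induced $4$-cycle (triangle-freeness rules out the potential chords $uw$ and $vz$); by Lemma~\ref{SEPARATING} this $4$-cycle is non-separating, so $V(G_2) = V(Q)$ and $Q$ splits off the desired $4$-face with $V(F') \cap V(P) \subseteq \{u, z\}$. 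Henceforth I assume $u$ and $z$ are non-consecutive on $B(G)$, so that by Lemma~\ref{NOCHORD} $Q$ is an induced path.

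Next I define $f' \in \mathbb{Z}^{G_2}$ by $f'(v) = 0$ if $v \in V(Q)$ and $f'(v) = f(v)$ otherwise. Because $V(P) \cap V(G_2) \subseteq V(Q)$ and interior vertices of $G_2$ remain interior of $G$, the triple $(G_2, Q, f')$ satisfies condition (i) of Theorem~\ref{MAINRESULT}, and the set $\{v : f'(v) = 1\} \subseteq I$ is still independent. Hence the only remaining way for $(G_2, Q, f')$ to fail the theorem's hypotheses is that some $y \in V(G_2) \setminus V(Q)$ with $f(y) = 1$ has at least two neighbors in $V(Q)$; otherwise, Observation~\ref{ob2} combined with the minimality of $G$ would produce a contradiction. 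Such a $y$ must therefore exist, and triangle-freeness restricts its neighbors in $V(Q)$ to one of the three non-edge pairs $\{u, w\}$, $\{u, z\}$, $\{v, z\}$.

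In the subcase $y \sim u, w$, since $u, y \in B(G)$ and $uy \in E(G)$, Lemma~\ref{NOCHORD} places $uy$ on $B(G)$. The $4$-cycle $uvwy$ is induced (any chord $uw$ or $vy$ would create a triangle) and non-separating by Lemma~\ref{SEPARATING}, so it bounds a $4$-face. Simultaneously, $ywz$ is a $2$-chord of $B(G)$ with $y \notin V(P)$; the side of $ywz$ containing $P$ meets $V(P)$ in all four vertices $\{p_1, p_2, p_3, p_4\}$, so by Lemma~\ref{2-chord} the extracted $5^-$-face must lie on the opposite side, where $|V(F) \cap V(P)| \le 1$. The ``moreover'' clause of Lemma~\ref{2-chord} then pins this face down as a $4$-face of the form $[ywza]$. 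This $4$-face shares the edge $yw$ with $uvwy$, yielding two adjacent $4$-cycles and contradicting Lemma~\ref{NOADJ}. The symmetric subcase $y \sim v, z$ is handled identically via the $2$-chord $uvy$ and the induced $4$-cycle $yvwz$.

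Hence $y$ must be adjacent to both $u$ and $z$. By Lemma~\ref{NOCHORD} both $uy$ and $yz$ lie on $B(G)$, so $u, y, z$ are three consecutive vertices of $B(G)$; the arc of $B(G)$ lying in $G_2$ is precisely $u\,y\,z$, and the boundary of $G_2$ is the induced $5$-cycle $C = uvwzy$. Lemma~\ref{SEPARATING} forces $C$ to be non-separating, so $V(G_2) = V(C)$ and $C$ bounds the desired $5$-face $F'$ with $V(F') \cap V(P) \subseteq \{u, z\}$. The main obstacle will be the $\{u, w\}$ and $\{v, z\}$ subcases: I need to correctly identify the ``small'' side of the auxiliary $2$-chord, invoke the ``moreover'' clause of Lemma~\ref{2-chord} to force the extracted face to be a $4$-face, and then exploit the shared edge with $uvwy$ (respectively $yvwz$) to derive a contradiction with Lemma~\ref{NOADJ}.
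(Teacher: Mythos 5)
Your proposal is correct and follows essentially the same route as the paper: reduce via Observation~\ref{ob2} to find a vertex $y$ with $f(y)=1$ having two neighbours in $Q$, split into the three possible non-adjacent pairs, kill $\{u,w\}$ and $\{v,z\}$ by combining the $4$-cycle through $y$ with the $4$-face split off by the auxiliary $2$-chord (contradicting Lemma~\ref{NOADJ}), and conclude in the $\{u,z\}$ case that $uvwzy$ bounds the desired $5$-face. You supply a few details the paper leaves implicit (the explicit modified function $f'$, and the use of Lemmas~\ref{NOCHORD} and \ref{SEPARATING} to show $uvwzy$ is facial), but the argument is the same.
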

	\begin{proof}
		Let $G_{1}$ and $G_{2}$ be the two subgraphs of $G$ separated by $Q$. 
		Since $\{u, z\} \cap \{p_{2}, p_{3}\} = \emptyset$, we may assume that $P \subset G_{1}$.

		If $uz \in E(G)$, then since $B(G)$ has no chord, $G_2$ is a facial 4-cycle and $Q$ splits off a 4-face.
		Assume $uz \notin E(G)$.

		By Observation~\ref{ob2}, $(G_2,Q,f)$ does not satisfy the conditions of Theorem~\ref{MAINRESULT}. Then there exists a vertex $x$ with $f(x) = 1$ adjacent to two vertices in $\{u, v, w, z\}$. If $x$ is adjacent to $u$ and $z$, the $Q$ splits off the $5$-face ${F}'$ bounded by $uvwzxu$. 
		If $x$ is adjacent to $u$ and $w$, then by Lemma~\ref{2-chord}, the $2$-chord $xwz$ splits off a $4$-face $[xwzz'x]$. Then $uvwxu$ and $xwzz'x$ are adjacent 4-cycles, which contradicts Lemma~\ref{NOADJ}. The case that $x$ is adjacent to $z$ and $v$ is symmetric.
	\end{proof}

	We may assume that $f(x_1)=1$ or $f(x_2)=1$, for otherwise, let $f'=f$ except that $f'(x_1)=1$, $(G,P,f')$ satisfies the conditions of Theorem~\ref{MAINRESULT}. Note that ${I}$ is independent in $G$. Hence by the minimality of $\sum_{v \in V(G)\setminus V(P)} f(v)$, $G-P$ is weakly $f'_{-P}$-degenerate, and hence $G-P$ is weakly $f_{-P}$-degenerate.

	We write $f(x_1,x_2,\dots,x_i)=(a_1,a_2, \dots, a_i)$ to mean that $f(x_j)=a_j$ for $j=1,2,\dots, i$.

	Let $X$ be the set of boundary vertices defined as follows:
	
	\[
	X=\begin{cases}
	\{x_1\}, &\text{if $f(x_1, x_2,x_3)=(1,2,2)$}, \cr
	\{x_2,x_3,x_4\}, &\text{if $f(x_1,x_2,x_3,x_4) =(1,2,1,2)$ and either $m=4$ or $f(x_5)=1$}, \cr
	\{x_2,x_3,x_4,x_5\}, &\text{if $f(x_1,x_2,x_3,x_4, x_5) =(1,2,1,2,2)$ and either $m=5$ or $f(x_6)=1$}, \cr
	&\text{ and there is a 2-chord connecting $x_2$ and $x_4$}, \cr
		\{x_2,x_3\}, &\text{if $f(x_1,x_2,x_3,x_4, x_5) =(1,2,1,2,2)$ and either $m=5$ or $f(x_6)=1$}, \cr
		&\text{ and there is no 2-chord connecting $x_2$ and $x_4$}, \cr
	\{x_2,x_3,x_4\}, &\text{if $f(x_1,x_2,x_3,x_4, x_5, x_6) =(1,2,1,2,2,2)$}, \cr
	\{x_1, x_2,x_3\}, &\text{if $f(x_1,x_2,x_3,x_4) =(2,1,2,1)$}, \cr
	\{ x_2\}, &\text{if $f(x_1,x_2,x_3,x_4) =(2,1,2,2)$}. \cr
	\end{cases}
	\]
	Let
	$$Y=\{u: u \text{ is an interior vertex of a $3^-$-chord connecting two vertices of $X$}\}.$$

	Observe that if $x_jux_{j'}$ is a 2-chord with $j < j'$, then $j' \ge j+2$ as $G$ is triangle-free. On the other hand, for $j < j''< j'$, we have $\deg(x_{j''})=2$, as the 2-chord $x_jux_{j'}$ splits off a $5^-$-face and there is no separating $5^{-}$-cycle by Lemma~\ref{SEPARATING}. Hence $j'=j+2$ and $f(x_{j+1})=1$. Similarly, if $x_juvx_{j'}$ is a 3-chord with $j < j'$, then $j' \le j+2$ by Lemma~\ref{3-chord}, and if $j'=j+2$, then $f(x_{j+1})=1$ by Lemma~\ref{SEPARATING}.

	\begin{lemma}
		\label{lem-2}
		 	The following hold:
		 	\begin{enumerate}
		 		\item[(1)] No two vertices of $X \cup Y$ are connected by a $3^-$-path with interior vertices in $V(G)-(X \cup Y)$, where $3^-$-path means a path with length at most 3.
		 		\item[(2)] If $x_juvx_{j'}$ is a 3-chord connecting two vertices of $X$, then at most one of $u,v$ has a neighbor in $P$.				
		 		\item[(3)] Each vertex in $Y$ has degree $2$ in $G[X \cup Y]$. Hence $G[Y]$ consists of some isolated vertices and at most two copies of $K_2$.
		 		\item[(4)] There is no 2-chord $xuz$ with $x \in X$ and $z \notin X$ and $f(z)=1$.
		 	\end{enumerate}
	\end{lemma}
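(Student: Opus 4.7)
The plan is to treat the four assertions in the order (4), (1), (3), (2), since each subsequent part piggybacks on the previous ones. The whole argument is a unified case analysis on the seven defining shapes of $X$, pulling from the structural toolkit already in place (Lemmas~\ref{SEPARATING}, \ref{NOADJ}, \ref{NOCHORD}, \ref{2-chord}, \ref{2-chord-E}, and \ref{3-chord}) together with the observation immediately preceding the lemma: any $2$-chord $x_{j}ux_{j'}$ with $j<j'$ forces $j'=j+2$ and $f(x_{j+1})=1$, and any $3$-chord $x_{j}uvx_{j'}$ has $j' \le j+2$, with $f(x_{j+1})=1$ whenever $j'=j+2$.

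Part (4) is immediate: given a $2$-chord $xuz$ with $x \in X$, $z \notin X$, and $f(z)=1$, I would apply Lemma~\ref{2-chord-E} to the $2$-chord read as $zux$ to conclude $x \in \{p_{2},p_{3}\}$; but $X \subseteq \{x_{1},\dots,x_{m}\}$, so this contradicts the choice of $x$.

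For part (1), I would argue by contradiction via a short path $\pi$ of length $2$ or $3$ between two vertices of $X \cup Y$ with all interior vertices outside $X \cup Y$. If both endpoints lie in $X$, then $\pi$ is a $2$-chord or $3$-chord (chords being ruled out by Lemma~\ref{NOCHORD}); the boundary-gap restriction above pins the gap $j'-j$ and forces $f(x_{j+1})=1$ when $j'=j+2$; inspection of each defining shape of $X$ then shows that the interior vertices of $\pi$ must already lie in $Y$ by definition of $Y$, violating the hypothesis. If some endpoint lies in $Y$, say $y$ is interior to a $3^{-}$-chord $\sigma$ between two vertices of $X$, then concatenating $\pi$ with a portion of $\sigma$ produces a new short chord whose face, together with the face that Lemma~\ref{2-chord} or Lemma~\ref{3-chord} associates with $\sigma$, yields either a $4$-cycle adjacent to a $5^{-}$-cycle or a separating $7^{-}$-cycle, contradicting Lemma~\ref{NOADJ} or Lemma~\ref{SEPARATING}.

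Parts (3) and (2) follow relatively quickly from (1). For (3), each $y \in Y$ contributes exactly two neighbors in $X \cup Y$ from the $3^{-}$-chord placing $y$ in $Y$ (both $X$-endpoints if $y$ is interior to a $2$-chord, or one $X$-endpoint together with its partner in $Y$ if $y$ is interior to a $3$-chord); a hypothetical third neighbor would, together with a portion of the chord defining $y$, produce a forbidden short path contradicting (1). Inspection of the defining shapes also shows that $G[Y]$ contains at most two copies of $K_{2}$, because each $3$-chord contributes at most one $K_{2}$ to $G[Y]$ and at most two such $3$-chords coexist under the gap restrictions. For (2), if both $u$ and $v$ of a $3$-chord $x_{j}uvx_{j'}$ had a neighbor in $P$, combining those adjacencies with the $3$-chord yields two shorter chords, one of which paired with the $5^{-}$-face supplied by Lemma~\ref{3-chord} produces a $4$-cycle adjacent to a $5^{-}$-cycle, again contradicting Lemma~\ref{NOADJ}. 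The main obstacle is the case enumeration in part (1): for each of the seven $X$-shapes one must verify that every permissible chord interior already lies in $Y$, which is mechanical but lengthy; once (1) is in hand, parts (2), (3), (4) fall out cleanly.
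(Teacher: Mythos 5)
Your proposal follows the paper's own route: part (4) is deduced from Lemma~\ref{2-chord-E} exactly as the paper does, and for parts (1)--(3) the paper likewise gives only a case-by-case check against the seven shapes of $X$ using triangle-freeness, Lemma~\ref{NOADJ} and Lemma~\ref{SEPARATING}, with the details omitted, so your sketch is at least as explicit as the published argument. The only point to watch is in part (2): the single contradiction you name (a $4$-cycle adjacent to a $5^{-}$-cycle via the face from Lemma~\ref{3-chord}) does not cover every placement of the two $P$-neighbours (e.g.\ when the cycle through $p_a u v p_b$ and $P$ has length $6$), and those residual sub-cases must instead be killed by applying Lemma~\ref{2-chord} to the $2$-chords $x_j u p_a$ and $x_{j'} v p_b$ themselves.
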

	\begin{proof}
		(1)-(3) can be easily checked in each case. We omit the details, but note that we may need to use the fact that $G$ is triangle-free, no $4$-cycle is adjacent to a $5^-$-cycle (Lemma~\ref{NOADJ}), and there is no separating $7^-$-cycle (Lemma~\ref{SEPARATING}). (4) follows from Lemma~\ref{2-chord-E}. 
	\end{proof}
	
	Assume that $Y=\{y_1, \dots, y_t\}$. Then it is easy to verify that $0 \le t \le 4$ ($Y =\emptyset$ if $t=0$). If $y_iy_j$ is an edge and $y_i$ is adjacent to a vertex in $P$, then we index the vertices of $Y$ so that $i < j$. 
	
	In the following, for convenience, we let $x_{m+1}=p_1$. 
	 
	 Let 
	 \[
	 \Omega =\begin{cases}
	\del (\langle x_1 \rangle), &\text{if $f(x_1, x_2,x_3)=(1,2,2)$}, \cr
	 \del (\langle x_4, x_5 \rangle, \langle x_3 \rangle, \langle x_2, x_1 \rangle, \langle y_1 \rangle, \dots, \langle y_t \rangle), &\text{if $f(x_1,x_2,x_3,x_4) =(1,2,1,2)$} \cr
	 &\text{ and either $m=4$ or $f(x_5)=1$}, \cr
	\del (\langle x_5, x_6 \rangle, \langle x_4 \rangle, \langle x_3 \rangle, \langle x_2, x_1 \rangle, \langle y_1 \rangle, \dots, \langle y_t \rangle), &\text{if $f(x_1,x_2,x_3,x_4, x_5) =(1,2,1,2,2)$} \cr
		&\text{ and either $m=5$ or $f(x_6)=1$, and } \cr
		&\text{ there is a 2-chord connecting $x_2$ and $x_4$}, \cr
\del (\langle x_3 \rangle, \langle x_2, x_1 \rangle, \langle y_1 \rangle, \dots, \langle y_t \rangle), 
		&\text{if $f(x_1,x_2,x_3,x_4, x_5) =(1,2,1,2,2)$} \cr
	    &\text{ and either $m=5$ or $f(x_6)=1$, and } \cr
		&\text{ there is no 2-chord connecting $x_2$ and $x_4$}, \cr
		\del (\langle x_4 \rangle, \langle x_3 \rangle, \langle x_2, x_1 \rangle, \langle y_1 \rangle, \dots, \langle y_t \rangle),  &\text{if $f(x_1,x_2,x_3,x_4, x_5, x_6) =(1,2,1,2,2,2)$}, \cr
	 \del (\langle x_3, x_4 \rangle, \langle x_2 \rangle, \langle x_1 \rangle, \langle y_1 \rangle, \dots, \langle y_t \rangle), &\text{if $f(x_1,x_2,x_3,x_4) =(2,1,2,1 )$}, \cr
		\del (\langle x_2 \rangle), &\text{if $f(x_1,x_2,x_3,x_4) =(2,1,2,2 )$}. \cr
	 \end{cases}
	 \]

	\begin{figure}[h!]%
		\def\s{1}
		\def\t{0.4*\s}
		\begin{tikzpicture}[scale = 0.8]
		\coordinate (O) at (0, 0);
		\coordinate (V1) at ($(O)+(\s, 0)$);
		\coordinate (V2) at ($(O)+(2*\s, 0)$);
		\coordinate (V3) at ($(O)+(3*\s, 0)$);
		\coordinate (V4) at ($(O)+(4*\s, 0)$);
		\coordinate (V5) at ($(O)+(5*\s, 0)$);
		\node[right] () at ($(O) + (0, -\s)$) {\del$(\langle x_{1} \rangle)$};
		\draw[dashed, red, fill = pink] ($(V1)+(-\t, \t)$)--($(V1)+(-\t, -\t)$)--($(V1) + (\t, -\t)$)--($(V1) + (\t, \t)$)--cycle;
		\draw (O)node[below]{\small$p_{4}$}--(V1)node[below]{\small$x_{1}$}--(V2)node[below]{\small$x_{2}$}--(V3)node[below]{\small$x_{3}$};
		\node[circle, inner sep = 1.5, fill, draw] () at (O) {};
		\node[rectangle, inner sep = 1.8, fill = white, draw] () at (V1) {};
		\node[regular polygon, regular polygon sides=3, inner sep = 0.9, fill = white, draw] () at (V2) {};
		\node[regular polygon, regular polygon sides=3, inner sep = 0.9, fill = white, draw] () at (V3) {};
		
		\begin{scope}[xshift = 11cm]
		\coordinate (O) at (0, 0);
		\coordinate (V1) at ($(O)+(\s, 0)$);
		\coordinate (V2) at ($(O)+(2*\s, 0)$);
		\coordinate (V3) at ($(O)+(3*\s, 0)$);
		\coordinate (V4) at ($(O)+(4*\s, 0)$);
		\coordinate (V5) at ($(O)+(5*\s, 0)$);
		\node[right] () at ($(O) + (0, -\s)$) {\del$(\langle x_{4}, x_{5} \rangle, \langle x_{3} \rangle, \langle x_{2}, x_{1} \rangle)$};
		\draw[dashed, red, fill = pink] ($(V2)+(-\t, \t)$)--($(V2)+(-\t, -\t)$)--($(V4) + (\t, -\t)$)--($(V4) + (\t, \t)$)--cycle;
		\draw (O)node[below]{\small$p_{4}$}--(V1)node[below]{\small$x_{1}$}--(V2)node[below]{\small$x_{2}$}--(V3)node[below]{\small$x_{3}$}--(V4)node[below]{\small$x_{4}$}--(V5)node[below]{\small$x_{5}$};
		\node[circle, inner sep = 1.5, fill, draw] () at (O) {};
		\node[rectangle, inner sep = 1.8, fill = white, draw] () at (V1) {};
		\node[regular polygon, regular polygon sides=3, inner sep = 0.9, fill = white, draw] () at (V2) {};
		\node[rectangle, inner sep = 1.8, fill = white, draw] () at (V3) {};
		\node[regular polygon, regular polygon sides=3, inner sep = 0.9, fill = white, draw] () at (V4) {};
		\node[circle, inner sep = 1.5, fill = white, draw] () at (V5) {};
		\end{scope}
		\end{tikzpicture}\vspace{0.5cm}
		
		\begin{tikzpicture}[scale = 0.8]
		\coordinate (O) at (0, 0);
		\coordinate (V1) at ($(O)+(\s, 0)$);
		\coordinate (V2) at ($(O)+(2*\s, 0)$);
		\coordinate (V3) at ($(O)+(3*\s, 0)$);
		\coordinate (V4) at ($(O)+(4*\s, 0)$);
		\coordinate (V5) at ($(O)+(5*\s, 0)$);
		\coordinate (V6) at ($(O)+(6*\s, 0)$);
		\coordinate (Z) at ($(V3)+(0, \s)$);
		\node[right] () at ($(O) + (0, -\s)$) {\del$(\langle x_{5}, x_{6} \rangle, \langle x_{4} \rangle, \langle x_{3} \rangle, \langle x_{2}, x_{1} \rangle)$};
		\draw[dashed, red, fill = pink] ($(V2)+(-\t, \t)$)--($(V2)+(-\t, -\t)$)--($(V5) + (\t, -\t)$)--($(V5) + (\t, \t)$)--cycle;
		\draw (O)node[below]{\small$p_{4}$}--(V1)node[below]{\small$x_{1}$}--(V2)node[below]{\small$x_{2}$}--(V3)node[below]{\small$x_{3}$}--(V4)node[below]{\small$x_{4}$}--(V5)node[below]{\small$x_{5}$}--(V6)node[below]{\small$x_{6}$};
		\draw (V2)--(Z)--(V4);
		\node[circle, inner sep = 1.5, fill, draw] () at (O) {};
		\node[rectangle, inner sep = 1.8, fill = white, draw] () at (V1) {};
		\node[regular polygon, regular polygon sides=3, inner sep = 0.9, fill = white, draw] () at (V2) {};
		\node[rectangle, inner sep = 1.8, fill = white, draw] () at (V3) {};
		\node[regular polygon, regular polygon sides=3, inner sep = 0.9, fill = white, draw] () at (V4) {};
		\node[regular polygon, regular polygon sides=3, inner sep = 0.9, fill = white, draw] () at (V5) {};
		\node[circle, inner sep = 1.5, fill = white, draw] () at (V6) {};
		\node[regular polygon, regular polygon sides=3, inner sep = 0.9, fill = white, draw] () at (Z) {};
		
		\begin{scope}[xshift = 11cm]
		\coordinate (O) at (0, 0);
		\coordinate (V1) at ($(O)+(\s, 0)$);
		\coordinate (V2) at ($(O)+(2*\s, 0)$);
		\coordinate (V3) at ($(O)+(3*\s, 0)$);
		\coordinate (V4) at ($(O)+(4*\s, 0)$);
		\coordinate (V5) at ($(O)+(5*\s, 0)$);
		\coordinate (V6) at ($(O)+(6*\s, 0)$);
		\coordinate (Z) at ($(V3)+(0, \s)$);
		\node[right] () at ($(O) + (0, -\s)$) {\del$(\langle x_{3} \rangle, \langle x_{2}, x_{1} \rangle)$};
		\draw[dashed, red, fill = pink] ($(V2)+(-\t, \t)$)--($(V2)+(-\t, -\t)$)--($(V3) + (\t, -\t)$)--($(V3) + (\t, \t)$)--cycle;
		\draw (O)node[below]{\small$p_{4}$}--(V1)node[below]{\small$x_{1}$}--(V2)node[below]{\small$x_{2}$}--(V3)node[below]{\small$x_{3}$}--(V4)node[below]{\small$x_{4}$}--(V5)node[below]{\small$x_{5}$}--(V6)node[below]{\small$x_{6}$};
		\draw[dashed] (V2)--(Z)--(V4);
		\node[circle, inner sep = 1.5, fill, draw] () at (O) {};
		\node[rectangle, inner sep = 1.8, fill = white, draw] () at (V1) {};
		\node[regular polygon, regular polygon sides=3, inner sep = 0.9, fill = white, draw] () at (V2) {};
		\node[rectangle, inner sep = 1.8, fill = white, draw] () at (V3) {};
		\node[regular polygon, regular polygon sides=3, inner sep = 0.9, fill = white, draw] () at (V4) {};
		\node[regular polygon, regular polygon sides=3, inner sep = 0.9, fill = white, draw] () at (V5) {};
        \node[circle, inner sep = 1.5, fill = white, draw] () at (V6) {};
        \node[regular polygon, regular polygon sides=3, inner sep = 0.9, fill = white, draw] () at (Z) {};
		\end{scope}
		\end{tikzpicture}\vspace{0.5cm}

		\begin{tikzpicture}[scale = 0.8]		
		\coordinate (O) at (0, 0);
		\coordinate (V1) at ($(O)+(\s, 0)$);
		\coordinate (V2) at ($(O)+(2*\s, 0)$);
		\coordinate (V3) at ($(O)+(3*\s, 0)$);
		\coordinate (V4) at ($(O)+(4*\s, 0)$);
		\coordinate (V5) at ($(O)+(5*\s, 0)$);
		\coordinate (V6) at ($(O)+(6*\s, 0)$);
		\node[right] () at ($(O) + (0, -\s)$) {\del$(\langle x_{4} \rangle, \langle x_{3} \rangle, \langle x_{2}, x_{1} \rangle)$};
		\draw[dashed, red, fill = pink] ($(V2)+(-\t, \t)$)--($(V2)+(-\t, -\t)$)--($(V4) + (\t, -\t)$)--($(V4) + (\t, \t)$)--cycle;
		\draw (O)node[below]{\small$p_{4}$}--(V1)node[below]{\small$x_{1}$}--(V2)node[below]{\small$x_{2}$}--(V3)node[below]{\small$x_{3}$}--(V4)node[below]{\small$x_{4}$}--(V5)node[below]{\small$x_{5}$}--(V6)node[below]{\small$x_{6}$};
		\node[circle, inner sep = 1.5, fill, draw] () at (O) {};
		\node[rectangle, inner sep = 1.8, fill = white, draw] () at (V1) {};
		\node[regular polygon, regular polygon sides=3, inner sep = 0.9, fill = white, draw] () at (V2) {};
		\node[rectangle, inner sep = 1.8, fill = white, draw] () at (V3) {};
		\node[regular polygon, regular polygon sides=3, inner sep = 0.9, fill = white, draw] () at (V4) {};
		\node[regular polygon, regular polygon sides=3, inner sep = 0.9, fill = white, draw] () at (V5) {};
		\node[regular polygon, regular polygon sides=3, inner sep = 0.9, fill = white, draw] () at (V6) {};
				
		\begin{scope}[xshift = 11cm]
		\coordinate (O) at (0, 0);
		\coordinate (V1) at ($(O)+(\s, 0)$);
		\coordinate (V2) at ($(O)+(2*\s, 0)$);
		\coordinate (V3) at ($(O)+(3*\s, 0)$);
		\coordinate (V4) at ($(O)+(4*\s, 0)$);
		\coordinate (V5) at ($(O)+(5*\s, 0)$);
		\coordinate (V6) at ($(O)+(6*\s, 0)$);
		\node[right] () at ($(O) + (0, -\s)$) {\del$(\langle x_{3}, x_{4} \rangle, \langle x_{2} \rangle, \langle x_{1} \rangle)$};
		\draw[dashed, red, fill = pink] ($(V1)+(-\t, \t)$)--($(V1)+(-\t, -\t)$)--($(V3) + (\t, -\t)$)--($(V3) + (\t, \t)$)--cycle;
		\draw (O)node[below]{\small$p_{4}$}--(V1)node[below]{\small$x_{1}$}--(V2)node[below]{\small$x_{2}$}--(V3)node[below]{\small$x_{3}$}--(V4)node[below]{\small$x_{4}$};
		\node[circle, inner sep = 1.5, fill, draw] () at (O) {};
		\node[regular polygon, regular polygon sides=3, inner sep = 0.9, fill = white, draw] () at (V1) {};
		\node[rectangle, inner sep = 1.8, fill = white, draw] () at (V2) {};
		\node[regular polygon, regular polygon sides=3, inner sep = 0.9, fill = white, draw] () at (V3) {};
		\node[rectangle, inner sep = 1.8, fill = white, draw] () at (V4) {};
		\end{scope}
		\end{tikzpicture}\vspace{0.5cm}
		
		\begin{tikzpicture}[scale = 0.8]
		\coordinate (O) at (0, 0);
		\coordinate (V1) at ($(O)+(\s, 0)$);
		\coordinate (V2) at ($(O)+(2*\s, 0)$);
		\coordinate (V3) at ($(O)+(3*\s, 0)$);
		\coordinate (V4) at ($(O)+(4*\s, 0)$);
		\coordinate (V5) at ($(O)+(5*\s, 0)$);
		\coordinate (V6) at ($(O)+(6*\s, 0)$);
		\node[right] () at ($(O) + (0, -\s)$) {\del$(\langle x_{2} \rangle)$};
		\draw[dashed, red, fill = pink] ($(V2)+(-\t, \t)$)--($(V2)+(-\t, -\t)$)--($(V2) + (\t, -\t)$)--($(V2) + (\t, \t)$)--cycle;
		\draw (O)node[below]{\small$p_{4}$}--(V1)node[below]{\small$x_{1}$}--(V2)node[below]{\small$x_{2}$}--(V3)node[below]{\small$x_{3}$}--(V4)node[below]{\small$x_{4}$};
		\node[circle, inner sep = 1.5, fill, draw] () at (O) {};
		\node[regular polygon, regular polygon sides=3, inner sep = 0.9, fill = white, draw] () at (V1) {};
		\node[rectangle, inner sep = 1.8, fill = white, draw] () at (V2) {};
		\node[regular polygon, regular polygon sides=3, inner sep = 0.9, fill = white, draw] () at (V3) {};
		\node[regular polygon, regular polygon sides=3, inner sep = 0.9, fill = white, draw] () at (V4) {};
		\end{tikzpicture}
		\caption{The definition of the subset $X$ and the corresponding removal scheme. A square indicates a vertex $v$ with $f(v) = 1$, a triangle indicates a vertex $v$ with $f(v) = 2$.}
	\end{figure}

It is straightforward to verify that $\Omega$ is legal for $(G, P, f)$ by Lemma~\ref{lem-2}~(2).   To finish the proof of Theorem~\ref{MAINRESULT}, it suffices to prove that $(G_{\Omega}, P, f_{\Omega})$ satisfies the conditions of Theorem~\ref{MAINRESULT}, and hence 
$G_{\Omega}-E[P]$ is weakly $f_{\Omega}$-degenerate.  

Assume $v \in B(G_{\Omega})$. If $f(v) =2$, then by Lemma~\ref{lem-2}~(1), $f_{\Omega}(v) \ge 1$. If $f(v)=1$, then since $G$ has no chord (Lemma~\ref{NOCHORD}) and there is no 2-chord $xuv$ with $x \in X$ and $v \notin X$ and $f(v)=1$ (Lemma~\ref{lem-2}~(4)), then $f_{\Omega}(v) = f(v) = 1$. So $(G_{\Omega}, P, f_{\Omega})$ satisfies the  (i) of Theorem~\ref{MAINRESULT}.

Let ${I}' = \{v \in B(G_{\Omega}): f_{\Omega}(v) = 1\}$.
Assume there exists $uv \in E(G_{\Omega})$ with $u,v \in {I}'$.
As ${I}$ is independent in $G$, we may assume that $f(u) = 2$. By Lemma~\ref{lem-2}~(1), $f(v) = 1$.
By Lemma~\ref{lem-2}~(4), $u$ is adjacent to a vertex $y\in Y$. Assume $yx\in E(G)$ and $x\in X$, then $vuyx$ is a 3-chord in $G$ with $\{v, x\} \cap \{p_{2}, p_{3}\} = \emptyset$.
 By Lemma~\ref{3-chord}, the $3$-chord $vuyx$ splits off a $5^{-}$-face $F$ with $V(F) \cap V(P) = \emptyset$. Since ${I}$ is independent in $G$ and $f(v) = 1$, the face $F$ must be a $4$-face $[vuyxv]$. Otherwise there is a separating 5-cycle. However, $yx$ must be in a $5^-$-cycle in $G[X\cup Y]$, a contradiction. 
Thus ${I}'$ is an independent set. 
By Lemma~\ref{2-chord}, every internal vertex of $G$ is adjacent to at most one vertex in $P$. So $(G_{\Omega}, P, f_{\Omega})$ satisfies the  (ii) of Theorem~\ref{MAINRESULT}.
	 So $(G_{\Omega}, P, f_{\Omega})$ satisfies the all conditions of Theorem~\ref{MAINRESULT}. This completes the proof of Theorem~\ref{MAINRESULT}.

\section{Proof of Theorem~\ref{thm-main2}}

The \emph{face-width} $\fw(G)$ of a graph $G$ embedded in a surface $S$ is the largest integer $k$ such that every non-contractible closed curve in $S$ intersects $G$ in at least $k$ points. It is obvious that for any graph $G$ embedded in $S$, $\fw(G) \le \ew(G)$. Theorem~\ref{thm-main2} will follow from the following Theorem~\ref{thmfw}.

\begin{theorem}\label{thmfw}
	For every surface $S$ there exists a constant $\w(S)$ such that
	every $5$-connected graph that can be embedded in $S$ with face-width at least $\w(S)$ is weakly $4$-degenerate.
\end{theorem}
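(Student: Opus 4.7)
The plan is to mimic the two-layer strategy that has been used to show locally planar graphs are $5$-colorable, $5$-choosable, $5$-paintable, and DP-$5$-paintable (Thomassen \cite{MR1234386}, DeVos--Kawarabayashi--Mohar \cite{MR2462315}, and the DP-paintable version in \cite{MR3351695,MR4117373}). First I would isolate a planar ``precoloring-extension'' lemma for weak $4$-degeneracy analogous to Theorem~\ref{MAINRESULT}; then I would use the face-width hypothesis together with $5$-connectivity to chop the embedded graph into annular pieces and apply the planar lemma iteratively.

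Step 1 (planar lemma). Strengthen the Bernshteyn--Lee result that planar graphs are weakly $4$-degenerate by adding precolored-boundary flexibility. Concretely I would prove: if $G$ is a plane graph with outer cycle $C$ and $f \in \mathbb{Z}^G$ satisfies $f(v)=4$ for every interior vertex, $f(v)\ge 3$ for every $v\in C$ except on a bounded subpath $P\subseteq C$ where $f$ may be as small as $0$, and the $\{v: f(v) = $ small$\}$ set is independent, then $G-E[P]$ is weakly $f$-degenerate. The proof is by induction on $|V(G)|+|E(G)|$, following the outline of Theorem~\ref{MAINRESULT}: rule out small separating cycles and short chords of $B(G)$, then design a removal scheme near $P$ using a small set $X$ of near-boundary vertices together with an associated set $Y$ of interior neighbors so that after the scheme the hypotheses are restored.

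Step 2 (annular decomposition). Let $S$ have Euler genus $g$. Choose $\w(S)$ large enough that, by standard representativity arguments, any embedded graph $G$ with $\fw(G)\ge \w(S)$ contains $N = N(g)$ pairwise disjoint, nested, non-contractible cycles $C_1,\dots,C_N$ such that the region between $C_i$ and $C_{i+1}$ (cut open to a plane annulus) contains no non-contractible closed curve of $G$ and is a plane subgraph of moderate size. Here $5$-connectivity forbids small cuts, so the collar cycles can be chosen short and each annular region is internally $3$-connected, with every interior vertex of degree $\ge 5$ in $G$.

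Step 3 (iterative reduction). Starting from the innermost annulus, apply the planar lemma of Step~1 with $P$ taken on one of the boundary cycles $C_i$, so that a legal removal scheme eliminates the interior side of $C_i$ while leaving $f$-values on $C_{i+1}$ that still satisfy the hypotheses of the planar lemma on the next annulus. Choose $\w(S)$ large enough that enough annuli are available to absorb all the topology: after finitely many reductions the remaining embedded graph is simply connected, hence planar, and a final application of the planar lemma finishes it off.

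The main obstacle will be Step 1. For choosability or DP-paintability the corresponding planar lemma is already in the literature and only a $4$-vs-$5$ bookkeeping shift is needed, but weak degeneracy is more delicate: a \textsf{DeleteSave} move needs a \emph{strict} inequality $f(u)>f(w)$ between adjacent $f$-values, so the precise $f$-profile one is allowed to leave on the interior-facing boundary $C_{i+1}$ after each annular reduction has to be crafted carefully to be both producible by Step~3 and strong enough to drive the induction in Step~1. Once the planar lemma is formulated with the right interface, the topological recursion in Steps~2--3 proceeds exactly as in the paintable and DP-paintable analogues.
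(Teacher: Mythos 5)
There is a genuine gap, and it sits exactly where your plan is vaguest: the interface control in Steps 2--3. When you delete the region on one side of a cycle $C_{i+1}$, each surviving vertex $v$ on $C_{i+1}$ has its value reduced by the number of its neighbors among the deleted vertices, and in a triangulation that number can be arbitrarily large; nothing in your annular decomposition bounds it, so the residual function can become negative before your planar lemma ever applies. This is precisely the difficulty that forced the $5$-choosable/$5$-paintable proofs to abandon naive planarizing-cycle decompositions, and the paper inherits their solution: it invokes the $H$-scheme of \cite{MR3351695} (Lemma~\ref{keylem}), which partitions $V(G)$ into pieces $G_1,\dots,G_5$ (disks $D(x)$, connecting paths, small buffer sets, disks $D'(e)$, and the leftover planar components $G[U']$) together with an \emph{orientation} of the bipartite graph $R$ between the scheme and its complement in which every leftover vertex has out-degree at most $2$. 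The vertices of the connecting paths are then removed by \ds{} moves directed along this orientation, so that each vertex of the remaining planar pieces is guaranteed to retain value at least $2$ on its boundary and $4$ in its interior --- exactly the hypothesis of the Bernshteyn--Lee planar lemma (Lemma~\ref{planarlem}). Your proposal contains no mechanism playing the role of $R$ and its out-degree bounds, and without one the iteration does not close.

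A secondary point: you identify Step 1 (a new Thomassen-style precoloring-extension lemma for weak $4$-degeneracy, with values $\ge 3$ on the boundary) as the main obstacle, but the paper needs no such lemma. It uses, as a black box, the already-known Lemma~\ref{planarlem} of Bernshteyn and Lee, whose boundary profile is $2$ (not $3$) outside two precolored consecutive vertices; the whole point of the $H$-scheme bookkeeping is to deliver exactly that profile. So the hard work is not in proving a stronger planar lemma but in engineering the decomposition and the \ds{} orientation so that the existing lemma applies to every planar piece. If you want to salvage your outline, replace the nested-annulus decomposition by a structure in which every vertex outside the removed region has a uniformly bounded number of removed neighbors, and arrange the removal order so that the few losses that do occur are compensated by \ds{} saves.
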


Now we show that Theorem~\ref{thmfw} implies Theorem~\ref{thm-main2}. The following result was proved in \cite{Bernshteyn2021a} and is used in our proof.

\begin{lemma}\label{planarlem}
	Let $G$ be a plane graph with at least $3$ vertices, $P$ is a set of 2 consecutive vertices on $B(G)$. Let $f: V(G) \rightarrow \mathbb{Z}$ by
	$$f(u)=\begin{cases}
	    0, &\text{if $u \in P$;}\\
		2, &\text{if $u \in B(G) - P$;}\\
		4, &\text{otherwise.}
	\end{cases}$$
	Then $G - E[P]$ is weakly $f$-degenerate, or equivalently, $G-P$ is weakly $f_{-P}$-degenerate.
\end{lemma}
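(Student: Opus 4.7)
I would prove Lemma~\ref{planarlem} by induction on $|V(G)|$, in the spirit of Thomassen's classical proof of planar $5$-choosability adapted to the weak-degeneracy setting. The base case $|V(G)|=3$ is immediate: $G-P$ is a single boundary vertex $v$ with $f_{-P}(v)=2-|N_G(v)\cap P|\ge 0$, so $\del(\langle v\rangle)$ is a legal removal scheme. For the inductive step, I first run the standard structural reductions. If $G$ has a cut vertex $c$, decompose $G=G_1\cup G_2$ with $V(G_1)\cap V(G_2)=\{c\}$ and $P\subseteq G_1$, apply the inductive hypothesis to $(G_1,P)$ and to $(G_2,\{c\})$, and glue the two schemes using Observation~\ref{obs-aa}. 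If $B(G)$ has a chord $xy$, split along $xy$ with $P\subseteq G_1$, apply induction to $(G_1,P)$ and $(G_2,\{x,y\})$, and again glue via Observation~\ref{obs-aa}. I may also freely assume $G$ is a near-triangulation, since any removal scheme legal for a plane supergraph with the same vertex set remains legal for $G$ (the legality conditions only get weaker as edges are removed).

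The main case is then: $G$ is a $2$-connected near-triangulation whose outer cycle $B(G)$ is chordless. Let $v$ be the boundary neighbor of $p_1$ distinct from $p_2$, let $y$ be $v$'s other boundary neighbor, and let $u_1,\ldots,u_k$ (with $k\ge 1$) be the interior neighbors of $v$ encountered in order between $p_1$ and $y$; near-triangulation forces the consecutive edges $p_1u_1,u_1u_2,\ldots,u_{k-1}u_k,u_ky$. I then exhibit an explicit removal scheme $\Omega$ that strips off $v$ together with the fan $u_1,\ldots,u_k$, placing DeleteSave targets at $p_1$ and $y$ so that $\Omega$ is legal for $(G,f)$ and so that the residual plane graph $G_\Omega = G-\{v,u_1,\ldots,u_k\}$ again fits the hypothesis on a smaller instance with the same precolored pair $P$, after which the inductive hypothesis plus Observation~\ref{obs-aa} yield the conclusion for $G$.

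The principal obstacle lies in this design step. A naive scheme such as $\del(\langle v,p_1\rangle,\langle u_1\rangle,\ldots,\langle u_k\rangle)$ either becomes illegal at the $u_i$ adjacent to $p_1$ (forcing $f(p_1)$ negative) or leaves $f_\Omega(y)=1$, below the $f=2$ lower bound the lemma requires on non-$P$ boundary vertices; a simple count shows that for small $k$ there are not enough DeleteSave slots in $\Omega$ to simultaneously protect both endpoints $p_1$ and $y$. Following Bernshteyn and Lee, I would therefore actually prove a slightly strengthened inductive statement — analogous in flavour to Theorem~\ref{MAINRESULT} of the present paper — that permits a short independent set of boundary vertices to carry $f$-value $1$ rather than $2$. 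With this extra flexibility the Thomassen-style fan reduction closes the induction: verifying that $\Omega$ is legal, that the new $f$-value-$1$ set on $B(G_\Omega)$ is independent, and that the exposed boundary of $G_\Omega$ remains chordless, is the routine but careful bookkeeping that completes the proof.
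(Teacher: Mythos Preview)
The paper gives no proof of Lemma~\ref{planarlem}; it simply quotes the result from Bernshteyn and Lee~\cite{Bernshteyn2021a}, so there is no argument in the paper to compare yours against.

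Your overall plan is the right one: induction on $|V(G)|$, the standard cut-vertex and chord reductions, passage to a $2$-connected near-triangulation, and a Thomassen-style boundary peel carried out under a strengthened inductive hypothesis. You also correctly diagnose the obstruction that forces the strengthening --- after the peel one boundary vertex is left with $f$-value $1$, which the bare statement of the lemma does not tolerate.

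The gap is in the reduction itself. You propose to remove $v$ \emph{together with the whole fan} $u_1,\dots,u_k$ and apply induction to $G-\{v,u_1,\dots,u_k\}$. This is not Thomassen's move and it does not leave a usable instance: any interior vertex $w$ adjacent to three or more of the $u_i$ --- which occurs as soon as some $u_j$ with $2\le j\le k-1$ has degree~$4$, since its fourth neighbour is then also adjacent to $u_{j-1}$ and $u_{j+1}$ --- becomes a new boundary vertex with $f_\Omega(w)=4-|\{i:wu_i\in E(G)\}|\le 1$, possibly $0$ or negative, and the \textsf{DeleteSave} slots you reserve for $p_1$ and $y$ do nothing for such $w$. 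The reduction that actually works removes only $v$ (say via $\ds(v,p_1)$): each $u_i$ then joins the boundary with $f$-value $3$, the single vertex $y$ drops to $1$, and the strengthened statement you allude to applies directly to $(G-v,P)$. Keep your strengthening idea, but excise $v$ alone rather than the whole fan.
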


\begin{proof}[Proof of Theorem~\ref{thm-main2}]
Assume Theorem \ref{thmfw} is true.  Let $\w(S)$ be the constant in Theorem~\ref{thmfw}. We shall prove that every graph embedded in $S$ with edge-width at least $3\w(S)$ is weakly $4$-degenerate.

Assume this is not true and $G_0$ is a counterexample with minimum $|V(G_0)|$.
We construct a triangulation $G$ of $S$ as follows: For each $6^-$-face of $G_0$, add edges to triangulate it.
For each $7^+$-face, we add a \emph{chimney} as follows: Assume the boundary of the face is $C_0=(v_{0,1}, v_{0,2}, \dots, v_{0,k})$. Note that $C_{0}$ is not necessarily a cycle.
Add $k$ new cycles $C_i=(v_{i,1}, v_{i,2}, \dots, v_{i,k})$ ($i=1,2,\dots, k$), add edges $v_{i,j}v_{i+1,j}, v_{i,j}v_{i+1, j+1}$ ($i=0,1, \dots, k-1$, $j=1,2,\dots, k$, where the additions are carried out modulo $k$), and finally add a new vertex adjacent to all vertices of $C_k$.

It is easy to see (cf. \cite{MR2462315}) that $\ew(G)= \fw(G) \ge \frac13 \ew(G_0) \ge \w(S)$.
As $G_0$ is a subgraph of $G$ and $G_0$ is not weakly $4$-degenerate, $G$ is not weakly $4$-degenerate. 
Thus, by Theorem~\ref{thmfw}, $G$ is not $5$-connected.
As $G$ is a triangulation of $S$ and $\ew(G)$ is large, there is a contractible separating $4^-$-cycle $C$.
Let $D=\int[C]$ and $D^o = D-C$ be the interior of $D$.
We choose $C$ so that $D^o$ contains the minimum number of vertices (subject to
the condition that $D^o \ne \emptyset$). This implies that either $D^o$ contains a single vertex, or each vertex in $D^o$ is adjacent to at most two vertices
of $C$. Observe that in constructing $G$ from $G_0$, new vertices are added only when chimneys are added.
As chimneys are added to $7^+$-faces, the interior of $D$
cannot contain added vertices only. Therefore, $D':=G_0 \cap D^o \ne \emptyset$.

Let $G'_0 = G_0-D'$ and $f(v)=4$ for $v\in G_0$. As $\ew(G_{0}') \geq \ew(G_0) \geq 3\w(S)$, by the minimality of $G_0$, the graph $G'_0$ is weakly $f$-degenerate.

Next, we show that $G_{0} - G_{0}'=D'$ is weakly $f_{-G'_0}$-degenerate. If $D'$ has a single vertex $v$, then $f_{-G'_0}(v)\ge 0$ since $N_{G'_0}(v)\subseteq V(C)$ and $|C|\le 4$. Assume that $|D'|\ge 2$. Let $B(D')$ be the boundary of $D'$. Then each vertex $v\in B(D')$ 
is adjacent to at most two neighbors in $G'_0$, and so $f_{-G'_0}(v)\ge 2$. Every interior vertex $v\in V(D'-B(D'))$ is not adjacent to any vertex of $G'_0$, and hence $f_{-G'_0}(v)=4$.
By Lemma~\ref{planarlem}, $D'$ is weakly $f_{-G'_0}$-degenerate. 

By Observation~\ref{obs-aa}, $G_0$ is weakly $f$-degenerate, a contradiction.
\end{proof}

It remains to prove Theorem~\ref{thmfw}. We may assume that $G$ is a triangulation of the surface $S$, because adding edges does not decrease the face-width or the connectivity of a graph. First, we will recall the definition of nice $H$-scheme in \cite{MR3351695}, which is an important structure in our proof.

\begin{definition}
	\label{scheme}
	Assume $G$ is a graph embedded in $S$ and $H$ is a cubic graph.
	An \emph{$H$-scheme} in $G$ is a family ${\cal F}$ of induced subgraphs of $G$ together with a
	labeling which associates subgraphs in ${\cal F}$ to vertices and edges of $H$
	such that the following hold:
	\begin{itemize}
		\item[A1]   ${\cal F} =\{D(x): x \in V(H)\} \cup \{D(e): e \in E(H)\} \cup \{P(e,x): e \in E(H),x \in e\}$ consists of a family of   subgraphs of $G$, each   embedded in a disk in $S$, and for   $e \in E(H)$ and   $x \in e$,   $P(e,x)  $ is a path connecting a vertex $v_{e,x}$ on the boundary of $D(x)$
		to a vertex $u_{e,x}$ on the boundary of $D(e)$.
		\item[A2] The subgraphs in ${\cal F}$ are pairwise disjoint, except that $v_{e,x}$ belong to both $P(e,x)$ and $D(x)$, and
		$u_{e,x}$ belong to both $P(e,x)$ and $D(e)$.
		Also no edge of $G$ connects vertices of distinct subgraphs in ${\cal F}$, except that
		$v_{e,x}$ has neighbors in both $D(x)$ and $P(e,x)$, and $u_{e,x}$ has neighbors in both $D(e)$ and $P(e,x)$.
		\item[A3] By contracting each $D(x)$ into a single vertex for each $x \in V(H)$, and replace each $P(e,x)\cup D(e) \cup P(e,y)$ by an edge joining $x$ and $y$ for each edge $xy \in E(H)$, we obtain a $2$-cell embedding of $H$ in $S$.
	\end{itemize}
\end{definition}

For $e=xy\in E(H)$, let $v_{e,x}^-$ and $v_{e,x}^+$ be the two neighbors of $v_{e,x}$ on the boundary of the outer face of $D(x)$, $u_{e,x}^-$ and $u_{e,x}^+$ be the two neighbors of $u_{e,x}$ on the boundary of the outer face of $D(e)$, and $u'_{e,x}$ be the unique common neighbor of $u_{e,x}, u_{e, x}^+, u_{e,x}^-$ in $D(e)$, if such a vertex exists. Let
\begin{eqnarray*}
	D'(e) = D(e) - \{u_{e,x}, u^-_{e,x}, u^+_{e,x}, u'_{e,x}, u_{e,y}, u^-_{e,y}, u^+_{e,y}, u'_{e,y} \}.
\end{eqnarray*}
If the vertices $u'_{e,x}$ and/or $u'_{e,y}$ do not exist, then ignore them in the above formula. Let $P'(e,x)=P(e,x) \cup \{v_{e,x}^-, u_{e,x}^-\}$. A \emph{segment} of a path $P$ is subset of its vertices that induces a subpath of $P$.

Assume $\mathcal{F}$ is an $H$-scheme in a graph $G$ embedded in $S$.  {Let $U = \bigcup_{F \in {\cal F}}V(F)$, $U' = V(G) \setminus U$ and $G'=G[U']$. By A3, each component of $G'$ is a plane graph embedded in a disk on $S$.} Let $R$  {be} the bipartite subgraph of $G$ induced by edges between $U$ and $U'$.

The following lemma is a combination of Lemma 2.4 and Lemma 3.2 in \cite{MR3351695}.

\begin{lemma}
	\label{keylem}
	For any surface $S$, there is a constant $\w(S)$ such that the following holds:
	If a $5$-connected triangulation $G$ of $S$ has face-width at least $\w(S)$, then there is a cubic graph $H$ such that $G$ has an $H$-scheme $\mathcal{F}$  {satisfying} for each edge $e=xy$ of $H$, for any vertex $u \in D'(e)$, $$N_G(u) \cap U \subseteq D(e), |N_G(u) \cap \{u_{e,x}, u_{e,x}^+, u_{e,x}^-, u'_{e,x}, u_{e,y}, u_{e,y}^+, u_{e,y}^-, u'_{e,y}\}| \le 2,    {\text{dist}}_{G}(u_{e,x}, u_{e,y}) \geq 5.$$ Moreover, the associated bipartite graph $R$ has an orientation for which the following hold:
	\begin{itemize}
		\item[(1)] For $v \in U$, $\deg_R^+(v) \le 1$. Moreover, for $e \in E(H)$ and $x \in e$, if $v \in D(x)$ or $v \in D(e)- \{u_{e,x}, u_{e,x}^-, u_{e,y}, u_{e,y}^-\}$, then $\deg_R^+(v)=0$;
		\item[(2)] For $v \in U'$, $\deg_R^+(v) \le 2$.
	    \item[(3)] For $e \in E(H)$,  $x \in e$ and $v \in U'$, if $N_R(v) \cap P'(e,x) \ne \emptyset$, then $N_R(v) \cap V(G)$ is a segment of $P'(e,x)$, and if $av$ is an in-edge of $v$, then $v$ has two out-edges $vb,vc$ such that $b,c$ lies between $v_{e,x}$ and $a$ on $P'(e,x)$. 
     \end{itemize}
\end{lemma}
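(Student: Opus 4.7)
The plan is to derive this lemma by directly invoking two earlier results of \cite{MR3351695}---Lemma~2.4 for the existence of an $H$-scheme with the stated local geometry, and Lemma~3.2 for the orientation of the cross-bipartite graph $R$---and then verify that their conclusions assemble into the present statement under a single choice of $\w(S)$.

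First, I would apply Lemma~2.4 of \cite{MR3351695}. For a $5$-connected triangulation $G$ of $S$ with face-width at least a sufficiently large constant $\w_1(S)$, this lemma produces a cubic graph $H$ embedded in $S$ together with an $H$-scheme $\mathcal{F}$ satisfying A1--A3 of Definition~\ref{scheme}. The disks $D(x)$ are built around a well-separated set of centers, and the pipes $D(e)$ are built along short geodesic-like paths joining those centers; when the face-width is at least $\w_1(S)$ the pipes can be taken long enough that $\text{dist}_G(u_{e,x},u_{e,y})\ge 5$. The containment $N_G(u)\cap U\subseteq D(e)$ and the bound $|N_G(u)\cap\{u_{e,x},u_{e,x}^{\pm},u'_{e,x},u_{e,y},u_{e,y}^{\pm},u'_{e,y}\}|\le 2$ for $u\in D'(e)$ are then consequences of the two distinguished $4$-vertex clusters lying near opposite ends of the pipe, separated by at least $5$ edges, so no vertex can simultaneously touch more than two consecutive members of such a cluster in a triangulation.

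Second, with the $H$-scheme in hand, I would apply Lemma~3.2 of \cite{MR3351695} to construct the required orientation of $R$. The key structural fact is that each component of $G[U']$ lies inside a disk whose boundary meets $U$ only along the boundaries of the $D(x)$, $D(e)$ and the connector paths $P'(e,x)$, so by planarity the neighborhood of any $v\in U'$ on a given $P'(e,x)$ is a consecutive segment. A local greedy or Hall-type argument then orients each cross-edge so that every $v\in U'$ sends out at most two edges (toward the two ends of its attachment segment), every $v\in U$ receives at most one in-edge, and no in-edges at all land inside the disks $D(x)$ or in the inner part $D(e)\setminus\{u_{e,x},u_{e,x}^{-},u_{e,y},u_{e,y}^{-}\}$; the segment/out-edge requirement in item~(3) is enforced directly by this construction.

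The principal obstacle is bookkeeping rather than new ideas: one has to translate the notational conventions of \cite{MR3351695} to the present setup---in particular, checking that choosing $\w(S)$ large enough forces the two distinguished end-clusters of each pipe $D(e)$ to be at distance at least $5$, and that the in-degree restrictions on vertices of $D(x)$ and on deep pipe vertices match between the two formulations. Once this alignment is verified, the combined statement follows immediately from the two cited lemmas.
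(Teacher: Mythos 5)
Your proposal matches the paper exactly: the paper gives no independent proof of this lemma, stating only that it is ``a combination of Lemma 2.4 and Lemma 3.2 in the cited work of Han and Zhu,'' which is precisely the two-step citation-and-assembly you describe. The additional bookkeeping you sketch (distance of the end-clusters, translation of the in-degree restrictions) is consistent with how those cited lemmas are used, so no further comment is needed.
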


\begin{proof}[Proof of Theorem~\ref{thmfw}]
Assume $S$ is a surface, $\w(S)$ is the constant in Lemma~\ref{keylem}, $G$ is a 5-connected triangulation of $S$ with face-width at least $\w(S)$. Let ${\cal F}$ be a $H$-scheme in $G$, and $R$ be the associated bipartite graph oriented as in Lemma~\ref{keylem}. Let $f(u)=4$ for any vertex $u\in V(G)$, and let
	\begin{align*}
		G_1&=\bigcup_{x\in V(H)}D(x),\\
		G_2&=\bigcup_{e\in E(H),\ x\in e}(P(e,x) \cup \{u^{-}_{e,x}\})\setminus\{v_{e,x}\},\\
		G_3&=\bigcup_{e\in E(H),\ x\in e}\{u'_{e,x},u^+_{e,x}\},\\
		G_4&=\bigcup_{e\in E(H)}D'(e),\\
		G_5&=G[U'].
	\end{align*}
	Observe that $V(G_1), V(G_2), V(G_3), V(G_4), V(G_5)$ form a partition of $V(G)$. 
	As each component of $G_1$ is a plane graph $D(x)$ for some $x\in V(H)$, $G_1$ is a plane graph and so $G_1$ is weakly $f$-degenerate. 
	
	Note that each component of $G_2$ is a path $(P(e,x) \cup \{u^{-}_{e,x}\})\setminus \{v_{e,x}\}$ for some $e\in E(H),\ x\in e$. We order the vertices of $G_2$ as $x_1,x_2, \ldots, x_t$ so that if $i<j$ and $x_i, x_j \in (P(e,x)\cup \{u^-_{e,x}\})\setminus\{v_{e,x}\}$, then $x_i$ lies between $v_{e,x}$ and $x_j$ on $P'(e,x)$. By (A2) of Definition \ref{scheme},   $f_{-G_1}(x_i)=3$ if $x_i$ is the unique vertex of $P(e,x)$ adjacent to $v_{e,x}$ and $f_{-G_1}(x_i)=4$ otherwise. By Lemma \ref{keylem}, $\deg_R^+(x_i) \le 1$ for $i\in[t]$. Let $w_i\in N_R^+(x_i)$ if $\deg_R^+(x_i)=1$ for $i\in [t]$. We define a removal scheme $\Omega=\del(\theta_1,\theta_2,\dots,\theta_t)$ as follows: for $i\in [t]$,
	 \[
	\theta_i =\begin{cases}
		\langle x_i \rangle, &\text{if $N_R^+(x_i)=\emptyset$}, \cr
		\langle x_i, w_i \rangle, &\text{if $N_R^+(x_i)=\{w_i\}$} \cr
	\end{cases}
	\]
	It suffices to show that $\theta_i$ is legal for $i\in [t]$. Let $A_i=\{x_1,x_2,\dots,x_i\}$ for $i\in [t]$. If $\deg_R^+(x_i)=0$, then $f_{-(G_1\cup A_{i-1})}(x_i)\ge 3\ge 0$, and so it is legal. Otherwise, $\deg_R^+(x_i)=1$. By~(3) of Lemma~\ref{keylem}, $w_i$ has two removed out-neighbors. Thus, $f_{-(G_1\cup A_{i-1})}(w_{i})\le 2<3\le f_{-(G_1\cup A_{i-1})}(x_i)$, and so it is also legal.
	
	Now we consider the vertices in $G_3$. By definition and Lemma \ref{keylem},  each component of $G_3$ has at most two vertices, $u^+_{e,x}$ and $u'_{e,x}$ (if exists). As for each $e\in E(H), x\in e$, $u^+_{e,x}$  has only one neighbor in $G_1\cup G_2$ and $u'_{e,x}$ (if exists) has two removed neighbors in $G_1\cup G_2$, $f_{-(G_1\cup G_2)}(v)\ge 2$ for $v\in G_3$. Then for each component, we can remove its vertices by deletion operation in the order $u^+_{e,x}$, $u'_{e,x}$. With the same operation, we can remove all the vertices of $G_3$ legally since the components of $G_3$ do not affect each other.
	
	In the subgraph $G_4$, as each component of $G_4$ is a plane graph $D'(e)$ for some $e\in E(H)$, $G_4$ is a plane graph. By Lemma \ref{keylem}, for each edge $e=xy$ of $H$, for any vertex $u \in D'(e)$, $$N_G(u) \cap \left(\bigcup_{i=1}^3 V(G_i)\right) \subseteq \{u_{e,x}, u_{e,x}^+, u_{e,x}^-, u'_{e,x}, u_{e,y}, u_{e,y}^+, u_{e,y}^-, u'_{e,y}\},$$ and $|N_G(u) \cap \{u_{e,x}, u_{e,x}^+, u_{e,x}^-, u'_{e,x}, u_{e,y}, u_{e,y}^+, u_{e,y}^-, u'_{e,y}\}| \le 2$.
	Therefore, $f_{-(\bigcup_{i=1}^3G_i)}(u)\ge 2$ if $u$ is on the boundary of $G_4$, and $f_{-(\bigcup_{i=1}^3G_i)}(u)=4$ otherwise. By Lemma~\ref{planarlem}, $G_4$ is weakly $f_{-(\bigcup_{i=1}^3G_i)}$-degenerate. 
	
	Let $h$ be the function produced by legal removing all the vertices in $\bigcup_{i=1}^4G_i$. Finally, for each vertex $u$ on the boundary of $G_5$, $h(u) \ge 2$ since the value of $f(u)$ only decreases when its out-neighbor is removed and $u$ has at most two out-neighbors in $\bigcup_{i=1}^4G_i$. For the interior vertex $u$, $h(u)=f(u)=4$. Similarly, by Lemma~\ref{planarlem}, $G_5$ is weakly $h$-degenerate.
\end{proof}

As a consequence of Theorem~\ref{thm-main2}, the condition in Theorem~\ref{thmfw} on the connectivity is redundant.

\begin{corollary}
	For any surface $S$ there is a constant $\w(S)$ such that every graph $G$ embedded in $S$ with face-width at least $\w(S)$
	is weakly $4$-degenerate.
\end{corollary}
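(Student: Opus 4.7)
The plan is to derive this corollary in essentially one line from Theorem~\ref{thm-main2}, exploiting the inequality $\fw(G) \le \ew(G)$ recorded at the start of Section~4. Since Theorem~\ref{thm-main2} carries no connectivity assumption, the $5$-connectivity hypothesis of Theorem~\ref{thmfw} will drop out automatically once I convert the face-width bound into an edge-width bound.

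Concretely, I would take the constant $\w(S)$ in the corollary to be the one already supplied by Theorem~\ref{thm-main2}. Given any graph $G$ embedded in $S$ with $\fw(G) \ge \w(S)$, the inequality $\fw(G) \le \ew(G)$ yields $\ew(G) \ge \w(S)$, and Theorem~\ref{thm-main2} then immediately gives that $G$ is weakly $4$-degenerate. No auxiliary machinery (no $H$-scheme, no triangulation step, no induction on genus) needs to be redone; the constant in the corollary can literally be copied from Theorem~\ref{thm-main2}.

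In this sense there is no genuine obstacle. The real work has already been absorbed inside the proof of Theorem~\ref{thm-main2}, whose argument (triangulate the given graph, locate a contractible separating $4^{-}$-cycle when the triangulation fails to be $5$-connected, peel off its interior via Lemma~\ref{planarlem}, and recurse) is precisely what allows one to dispense with the $5$-connectivity assumption of Theorem~\ref{thmfw}. The corollary is therefore a bookkeeping statement: it records that although Lemma~\ref{keylem} and Theorem~\ref{thmfw} needed $5$-connectivity in order to construct the cubic $H$-scheme, that assumption is unnecessary for the clean face-width-only conclusion, because the face-width hypothesis is already strong enough to invoke the edge-width version of the result.
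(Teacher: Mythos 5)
Your derivation is correct and is exactly what the paper intends: the corollary is stated as an immediate consequence of Theorem~\ref{thm-main2}, and since $\ew(G) \ge \fw(G) \ge \w(S)$, the edge-width hypothesis of that theorem is satisfied whenever the face-width hypothesis is. No further comment is needed.
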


\end{document}